\newtheorem{thm}{Theorem}[section]
\newtheorem{prop}[thm]{Proposition}
\newtheorem{lem}[thm]{Lemma}
\theoremstyle{definition}
\newtheorem{exam}[thm]{Example}
\newtheorem{rem}[thm]{Remark}
\newcommand{\M}{\mathcal{M}}
\newcommand{\I}{\mathcal{I}}
\newcommand{\T}{\mathcal{T}}
\newcommand{\PP}{\mathfrak{P}}
\newcommand{\FF}{\mathfrak{F}}
\newcommand{\CC}{\mathfrak{C}}
\newcommand{\X}{\mathscr{X}}
\newcommand{\Y}{\mathscr{Y}}
\newcommand{\z}{\mathscr{Z}}
\newcommand{\A}{\mathscr{A}}
\newcommand{\B}{\mathscr{B}}
\newcommand{\C}{\mathscr{C}}
\newcommand{\D}{\mathscr{D}}
\newcommand{\e}{\mathscr{E}}
\newcommand{\f}{\mathscr{F}}
\newcommand{\g}{\mathscr{G}}
\newcommand{\h}{\mathscr{H}}
\newcommand{\YB}{2^{\mathscr{Y}}}
\newcommand{\ZB}{2^{\mathscr{Z}}}
\newcommand{\GG}{\widetilde{\g}}
\newcommand{\Z}{\mathbb{Z}}
\newcommand{\G}{\Gamma}
\newcommand{\GH}{\hat{\Gamma}}
\newcommand{\SL}{\textrm{SL}}
\newcommand{\GL}{\textrm{GL}}
\author[R. Kobayashi]{Ryoma Kobayashi}
\address[R. Kobayashi]{
Department of General Education,\endgraf
National Institute of Technology, Ishikawa College,\endgraf
Tsubata, Ishikawa, 929-0392, Japan
}
\email{kobayashi\_ryoma@ishikawa-nct.ac.jp}
\thanks{\textit{Key words and phrases}. mapping class group, generator}
\thanks{The author was supported by JSPS KAKENHI Grant Number JP19K14542 and JP22K13920.
}
\begin{document}

\title{The level $d$ mapping class group of a compact non-orientable surface}

\maketitle

\begin{abstract}
Let $N_{g,n}$ be a genus $g$ compact non-orientable surface with $n$ boundaries.
We explain about relations on the level $d$ mapping class group $\M_d(N_{g,0})$ of $N_{g,0}$ and the level $d$ principal congruence subgroup  $\G_d(g-1)$ of $\SL(g-1;\Z)$.
As applications, we give a normal generating set of $\M_d(N_{g,n})$ for $g\ge4$ and $n\ge0$, and finite generating sets of $\M_d(N_{g,n})$ for some $d$, any $g\ge4$ and $n\ge0$.
\end{abstract}

\section{Introduction}

Let $N=N_{g,n}$ be a genus $g\ge1$ compact non-orientable surface with $n\ge0$ boundaries.
Note that $N_{g,0}$ is a connected sum of $g$ real projective planes.
In this paper, we describe $N_{g,n}$ as shown in Figure~\ref{non-ori-surf}, that is, $N_{g,n}$ is a surface obtained by attaching $g$ M\"obius bands to a sphere removing $g+n$ disjoint open disks.
The \textit{mapping class group} $\M(N)$ of $N$ is the group consisting of isotopy classes of diffeomorphisms of $N$ fixing boundary pointwise.
$\M(N)$ is generated by \textit{Dehn twists} and \textit{crosscap slides} defined as follows (see \cite{Li1}).
For a simple closed curve $c$ of $N$, its regular neighborhood is either an annulus  or a M\"obius band.
We call $c$ a \textit{two sided} or a \textit{one sided} simple closed curve respectively.
For a two sided simple closed curve $c$, the \textit{Dehn twist} $t_c$ about $c$ is the isotopy class of the map acting as shown in Figure~\ref{dehn-slide}~(a).
The direction of the twist is indicated by an arrow written beside $c$ as shown in Figure~\ref{dehn-slide}~(a).
For a one sided simple closed curve $\mu$ of $N$ and an oriented two sided simple closed curve $\alpha$ of $N$ such that $N\setminus\alpha$ is non-orientable when genus of $N$ is more than or equal to $3$ and that $\mu$ and $\alpha$ intersect transversely at only one point, the \textit{crosscap slide} $Y_{\mu,\alpha}$ about $\mu$ and $\alpha$ is the isotopy class of the map described by pushing the crosscap which is a regular neighborhood of $\mu$ once along $\alpha$, as shown in Figure~\ref{dehn-slide}~(b).

\begin{figure}[htbp]
\includegraphics{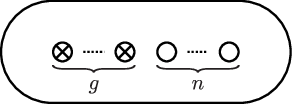}
\caption{A model of a compact non-orientable surface $N_{g,n}$.
The marks $\otimes$ are attached M\"obius bands and called \textit{crosscaps}.}\label{non-ori-surf}
\end{figure}

\begin{figure}[htbp]
\subfigure[The Dehn twist $t_c$ about $c$.]{\includegraphics{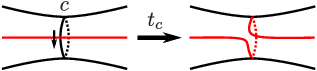}}\\
\subfigure[The crosscap slide $Y_{\mu,\alpha}$ about $\mu$ and $\alpha$.]{\includegraphics{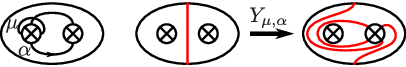}}
\caption{Descriptions of a Dehn twist and a crosscap slide.}\label{dehn-slide}
\end{figure}

We now introduce some notations.
The \textit{twist subgroup} $\T(N)$ of $\M(N)$ is the subgroup of $\M(N)$ generated by all Dehn twists.
For $d\geq2$, the \textit{level $d$ mapping class group} $\M_d(N)$ of $N$ is the subgroup of $\M(N)$ acting trivially on $H_1(N;\Z/d\Z)$.
For $d\geq2$, the \textit{level $d$ twist subgroup} $\T_d(N)$ of $\M(N)$ is defined as $\T_d(N)=\M_d(N)\cap\T(N)$.
It is known that $\T(N)$ and $\T_2(N)$ are index $2$ subgroups of $\M(N)$ and $\M_2(N)$, respectively (see \cite{Li2, Sz2}), and $\T_d(N)=\M_d(N)$ for $d\geq3$. 
The \textit{Torelli group} $\I(N)$ of $N$ is the subgroup of $\M(N)$ acting trivially on $H_1(N;\Z)$.
We note that $\I(N)\subset\T_d(N)$.
For $d\geq2$, the \textit{level $d$ principal congruence subgroup} $\G_d(n)$ (resp. $\GH_d(n)$) of $\SL(n;\Z)$ (resp. $\GL(n;\Z)$) is the kernel of the natural homomorphism $\SL(n;\Z)\to\SL(n;\Z/d\Z)$ (resp. $\GL(n;\Z)\to\GL(n;\Z/d\Z)$).
By the definition, $\SL(n;\Z)$ and $\G_2(n)$ are index $2$ subgroups of $\GL(n;\Z)$ and $\GH_2(n)$, respectively, and $\G_d(n)=\GH_d(n)$ for $d\geq3$.

Here is an outline of this paper.
In Section~\ref{2}, we present results obtained for relations on $\M_d(N_{g,0})$ and $\G_d(g-1)$ for $d\geq3$.
As applications, we give a normal generating set of $\M_d(N_{g,n})$ for $g\ge4$ and $n\ge0$, in Section~\ref{3}.
In addition, in Sections~\ref{4}, we give a finite generating set of $\M_4(N_{g,0})$ for $g\ge4$, and explain about a finite generating set of $\M_{2^l}(N_{g,0})$ for $l\ge3$ and $g\ge4$.
Moreover, in Section~\ref{5}, we explain about a finite generating set of $\M_d(N_{g,n})$ for $n\ge1$.
As corollary, we can obtain explicit finite generating sets of $\M_2(N_{g,n})$, $\T_2(N_{g,n})$ and $\M_4(N_{g,n})$ for $g\ge4$ and $n\ge1$.
Note that finite generating sets for $\M_2(N_{g,0})$ and $\T_2(N_{g,0})$ are already known (see \cite{Sz2,HS,KO1}).

In this paper, the product $gf$ of mapping classes $f$ and $g$ means that we apply $f$ first and then $g$. 
In calculations, we use basic relations on $\M(N_{g,n})$.
For details, for instance see \cite{Sz1,O,KO2, IK1, KO3}.

\section{Relations on $\M_d(N_{g,0})$ and $\G_d(g-1)$}\label{2}

In this section, we explain about relations on $\M(N_{g,0})$ and $\GL(g-1;\Z)$.
For $1\leq{i_1}<i_2<\dots<i_k\leq{g}$, let $\alpha_{i_1,i_2,\dots,i_k}$ be an oriented simple closed curve of $N_{g,n}$ as shown in Figure~\ref{alpha}.
We regard $\alpha_{i_1,i_2,\dots,i_k}$ as an element of $H_1(N_{g,n};\Z)$.
Note that $\alpha_{i_1,i_2,\dots,i_k}=\alpha_{i_1}+\alpha_{i_2}+\cdots+\alpha_{i_k}$.
Then, as a $\Z$-module, we have presentations
\begin{eqnarray*}
H_1(N_{g,0};\Z)/{\langle{\alpha_{1,2,\dots,g}}\rangle}&=&\langle{\alpha_1,\alpha_2,\dots,\alpha_g\mid\alpha_{1,2,\dots,g}=0}\rangle\\
&=&\langle{\alpha_1,\alpha_2,\dots,\alpha_{g-1}}\rangle.
\end{eqnarray*}
It is known that any automorphism of $H_1(N_{g,0};\Z)$ preserves the homology class of $\alpha_{1,2,\dots,g}$ (see \cite{MP, HK}).
Hence $f\in\M(N_{g,0})$ induces the automorphism $f_\ast$ of $H_1(N_{g,0};\Z)/{\langle{\alpha_{1,2,\dots,g}}\rangle}$.
More precisely, for $f\in\M(N_{g,0})$, when $f(\alpha_j)=\sum_{i=1}^gc_{ij}\alpha_i$, we define $f_\ast(\alpha_j)=\sum_{i=1}^{g-1}(c_{ij}-c_{gj})\alpha_i$.
We can regard an automorphism of $H_1(N_{g,0};\Z)/{\langle{\alpha_{1,2,\dots,g}}\rangle}$ as a matrix in $GL(g-1,\Z)$, that is, $f_\ast=(c_{ij}-c_{gj})\in\GL(g-1;\Z)$ for $f\in\M(N_{g,0})$ above.
Let $\Phi:\M(N_{g,0})\to\GL(g-1;\Z)$ be the homomorphism defined as $\Phi(f)=f_\ast$.

\begin{figure}[htbp]
\includegraphics{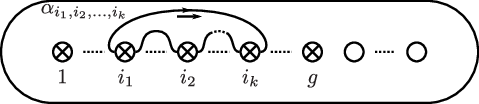}
\caption{An oriented simple closed curve $\alpha_{i_1,i_2,\dots,i_k}$ of $N_{g,n}$ for $1\leq{i_1}<i_2<\dots<i_k\leq{g}$.}\label{alpha}
\end{figure}

\begin{exam}\label{dehn-crosscap}
\begin{enumerate}
\item	Suppose that $k$ is even.
	We have
	$$
	t_{\alpha_{i_1,i_2,\dots,i_k}}^d(\alpha_j)=
	\left\{
	\begin{array}{ll}
	\alpha_j-d\alpha_{i_1,i_2,\dots,i_k}&(j=i_1,i_3,\dots,i_{k-1}),\\
	\alpha_j+d\alpha_{i_1,i_2,\dots,i_k}&(j=i_2,i_4,\dots,i_k),\\
	\alpha_j&(j\neq{}i_1,i_2,\dots,i_k).
	\end{array}
	\right.
	$$
	Hence $\Phi(t_{\alpha_{i,j}}^d)$ is the matrix whose $(i,j)$ entry is $d$, $(j,i)$ entry is $-d$, $(i,i)$ entry is $1-d$, $(j,j)$ entry is $1+d$, the other diagonal entries are $1$ and the other entries are $0$ if $j<g$, and $\Phi(t_{\alpha_{i,g}}^d)$ is the matrix whose $(j,i)$ entry is $d$ for $1\leq{j}\leq{g-1}$ with $j\ne{i}$, the diagonal entries are $1$ and the other entries are $0$.
	For example, when $g=3$, we have
	$$\Phi(t_{\alpha_{1,2}})=
	\begin{pmatrix}
	1-d&d\\
	-d&1+d
	\end{pmatrix}
	,~
	\Phi(t_{\alpha_{1,3}})=
	\begin{pmatrix}
	1&0\\
	d&1
	\end{pmatrix}
	,~
	\Phi(t_{\alpha_{2,3}})=
	\begin{pmatrix}
	1&d\\
	0&1
	\end{pmatrix}
	.$$
	When $g$ is even, $\Phi(t_{\alpha_{1,2,\dots,g}}^d)$ is the identity matrix.
\item	We have
	\begin{eqnarray*}
	Y_{\alpha_{i_1},\alpha_{i_1,i_2}}(\alpha_j)&=&
	\left\{
	\begin{array}{ll}
	-\alpha_{i_1}&(j=i_1),\\
	2\alpha_{i_1}+\alpha_{i_2}&(j=i_2),\\
	\alpha_j&(j\neq{}i_1,i_2),
	\end{array}
	\right.
	\\
	Y_{\alpha_{i_2},\alpha_{i_1,i_2}}(\alpha_j)&=&
	\left\{
	\begin{array}{ll}
	\alpha_{i_1}+2\alpha_{i_2}&(j=i_1),\\
	-\alpha_{i_2}&(j=i_2),\\
	\alpha_j&(j\neq{}i_1,i_2).
	\end{array}
	\right.
	\end{eqnarray*}
	Hence $\Phi(Y_{\alpha_i,\alpha_{i,j}})$ is the matrix whose $(i,j)$ entry is $2$, $(i,i)$ entry is $-1$, the other diagonal entries are $1$ and the other entries are $0$ if $j<g$, $\Phi(Y_{\alpha_j,\alpha_{i,j}})$ is the matrix whose $(j,i)$ entry is $2$, $(j,j)$ entry is $-1$, the other diagonal entries are $1$ and the other entries are $0$ if $j<g$, $\Phi(Y_{\alpha_i,\alpha_{i,g}})$ is the matrix whose $(i,i)$ entry is $-1$, the other diagonal entries are $1$ and the other entries are $0$, and $\Phi(Y_{\alpha_g,\alpha_{i,g}})$ is the matrix whose $(j,i)$ entry is $-2$ for $1\leq{j}\leq{g-1}$ with $j\neq{}i$, $(i,i)$ entry is $-1$, the other diagonal entries are $1$ and the other entries are $0$.
	For example, when $g=3$, we have
	$$
	\begin{array}{lll}
	\Phi(Y_{\alpha_1,\alpha_{1,2}})=
	\begin{pmatrix}
	-1&2\\
	0&1
	\end{pmatrix}
	,&
	\Phi(Y_{\alpha_2,\alpha_{1,2}})=
	\begin{pmatrix}
	1&0\\
	2&-1
	\end{pmatrix}
	,\\
	\Phi(Y_{\alpha_1,\alpha_{1,3}})=
	\begin{pmatrix}
	-1&0\\
	0&1
	\end{pmatrix}
	,&
	\Phi(Y_{\alpha_3,\alpha_{1,3}})=
	\begin{pmatrix}
	-1&0\\
	-2&1
	\end{pmatrix}
	,\\
	\Phi(Y_{\alpha_2,\alpha_{2,3}})=
	\begin{pmatrix}
	1&0\\
	0&-1
	\end{pmatrix}
	,&
	\Phi(Y_{\alpha_3,\alpha_{2,3}})=
	\begin{pmatrix}
	1&-2\\
	0&-1
	\end{pmatrix}
	.
	\end{array}
	$$
\end{enumerate}
\end{exam}

\begin{rem}\label{hom-pre}
Since $H_1(N_{g,0};\Z)$ has a presentation $\langle{\alpha_1,\alpha_2,\dots,\alpha_g\mid2\alpha_{1,2,\dots,g}=0}\rangle$, for any $x=\sum_{i=1}^gc_i\alpha_i$ and $y=\sum_{i=1}^gc^\prime_i\alpha_i\in{}H_1(N_{g,0};\Z)$, $x=y$ if and only if there is an even integer $l$ such that $c_i=c^\prime_i+l$ for $1\leq{i}\leq{g}$.
When $d$ is even, we denote any element of $H_1(N_{g,0};\Z)$ by $\sum_{i=1}^gc_i\alpha_i$, where $c_g$ is either $0$ or $1$.
Then the image with the natural projection $H_1(N_{g,0};\Z)\to{}H_1(N_{g,0};\Z/d\Z)$ can be presented by $\sum_{i=1}^{g-1}c^\prime_i[\alpha_i]+c_g[\alpha_g]$, where $0\leq{c^\prime_i}<d$ and $c^\prime_i\equiv{}c_i\pmod{d}$ for $1\leq{i}\leq{g-1}$.
When $d$ is odd, we denote any element of $H_1(N_{g,0};\Z)$ by $\sum_{i=1}^gc_i\alpha_i$, where $c_g$ is either $0$ or $d$.
Then the image with the natural projection $H_1(N_{g,0};\Z)\to{}H_1(N_{g,0};\Z/d\Z)$ can be presented by $\sum_{i=1}^{g-1}c^\prime_i[\alpha_i]$, where $0\leq{c^\prime_i}<d$ and $c^\prime_i\equiv{}c_i\pmod{d}$ for $1\leq{i}\leq{g-1}$.
\end{rem}

It is known that 	$\Phi(\M_2(N_{g,0}))=\GH_2(g-1)$, $\Phi(\T_2(N_{g,0}))=\G_2(g-1)$ and $\ker\Phi|_{\M_2(N_{g,0})}=\ker\Phi|_{\T_2(N_{g,0})}=\I(N_{g,0})$ (see \cite{MP,HK}).
We now prove the following theorem.

\begin{thm}\label{main-1}
For $g\geq4$ and $d\geq3$, we have followings.
\begin{enumerate}
\item	$\Phi(\M_d(N_{g,0}))=\G_d(g-1)$ if $d$ is even and $\Phi(\M_d(N_{g,0}))$ is a non-normal subgroup of $\G_d(g-1)$ if $d$ is odd.
\item	$\ker\Phi|_{\M_d(N_{g,0})}=\I(N_{g,0})$ if $g$ is odd or $d$ is even, and $\ker\Phi|_{\M_d(N_{g,0})}=\I(N_{g,0})\langle{t_{\alpha_{1,2,\dots,g}}^d}\rangle$ if $g$ is even and $d$ is odd.
\end{enumerate}
\end{thm}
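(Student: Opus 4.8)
The plan is to push everything through the action on $H_1(N_{g,0};\Z)$. Writing $H_1(N_{g,0};\Z)=\Z^{g-1}\oplus\langle\alpha_{1,2,\dots,g}\rangle$ with $\alpha_{1,2,\dots,g}$ of order $2$, every $f\in\M(N_{g,0})$ fixes $\alpha_{1,2,\dots,g}$ and acts as $\left(\begin{smallmatrix}\Phi(f)&0\\ v(f)&1\end{smallmatrix}\right)$ for a homomorphism $v(f)\colon\Z^{g-1}\to\Z/2\Z$; by definition $\I(N_{g,0})=\{f:\Phi(f)=I,\ v(f)=0\}$, where $I$ denotes the identity matrix. The first step is to read off the level condition in terms of $\Phi$ and $v$ according to the parity of $d$, using Remark~\ref{hom-pre}. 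When $d$ is odd the $2$-torsion class $\alpha_{1,2,\dots,g}$ vanishes, so $H_1(N_{g,0};\Z/d\Z)\cong(\Z/d\Z)^{g-1}$ and $f\in\M_d(N_{g,0})$ if and only if $\Phi(f)\equiv I\pmod d$; when $d$ is even it survives, $H_1(N_{g,0};\Z/d\Z)\cong(\Z/d\Z)^{g-1}\oplus\Z/2\Z$, and $f\in\M_d(N_{g,0})$ if and only if $f\in\M_2(N_{g,0})$ (so that $v(f)=0$) and $\Phi(f)\equiv I\pmod d$. I will also use the standard fact that $f$ preserves the mod $2$ intersection pairing, for which $\alpha_i\cdot\alpha_j=\delta_{ij}$.

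For $d$ even both assertions then fall out of the known equalities $\Phi(\M_2(N_{g,0}))=\GH_2(g-1)$ and $\ker\Phi|_{\M_2(N_{g,0})}=\I(N_{g,0})$. Indeed the criterion above says $\M_d(N_{g,0})=\M_2(N_{g,0})\cap\Phi^{-1}(\G_d(g-1))$, so $\Phi(\M_d(N_{g,0}))=\Phi(\M_2(N_{g,0}))\cap\G_d(g-1)=\GH_2(g-1)\cap\G_d(g-1)=\G_d(g-1)$, the last step because $\G_d(g-1)\subseteq\GH_2(g-1)$ for $d$ even. Likewise $\ker\Phi|_{\M_d(N_{g,0})}\subseteq\ker\Phi|_{\M_2(N_{g,0})}=\I(N_{g,0})$, and the reverse inclusion is clear since $\I(N_{g,0})\subseteq\M_d(N_{g,0})$.

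For $d$ odd the criterion gives $\ker\Phi|_{\M_d(N_{g,0})}=\ker\Phi$, so for part (2) I would describe $\ker\Phi$ directly. If $\Phi(f)=I$ then $f(\alpha_i)=\alpha_i+v_i\,\alpha_{1,2,\dots,g}$ with $v_i\in\Z/2\Z$. Imposing that $f$ preserve the intersection pairing and using $\alpha_{1,2,\dots,g}\cdot\alpha_j=1$ and $\alpha_{1,2,\dots,g}\cdot\alpha_{1,2,\dots,g}=g\bmod 2$, the equations $f(\alpha_i)\cdot f(\alpha_j)=\delta_{ij}$ reduce to $v_i+v_j+g\,v_iv_j=0$; the diagonal cases force $g\,v_i=0$, so all $v_i=0$ when $g$ is odd, while for $g$ even the off-diagonal cases force $v$ to be constant, i.e.\ $v=0$ or $v=(1,\dots,1)$. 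Since the computation in Example~\ref{dehn-crosscap} gives $\Phi(t_{\alpha_{1,2,\dots,g}}^d)=I$ with $v(t_{\alpha_{1,2,\dots,g}}^d)=(1,\dots,1)$ for $d$ odd (this twist existing precisely because $\alpha_{1,2,\dots,g}$ is two sided when $g$ is even), the value $(1,\dots,1)$ is realized, so $\ker\Phi=\I(N_{g,0})$ for $g$ odd and $\ker\Phi=\I(N_{g,0})\langle t_{\alpha_{1,2,\dots,g}}^d\rangle$ for $g$ even.

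Finally, for part (1) with $d$ odd the criterion gives $\Phi(\M_d(N_{g,0}))=\Phi(\M(N_{g,0}))\cap\G_d(g-1)$. As $\Phi(\M(N_{g,0}))\supseteq\Phi(\M_2(N_{g,0}))=\GH_2(g-1)$, the image $\Phi(\M(N_{g,0}))$ is the full preimage of its reduction $\overline{G}:=\Phi(\M(N_{g,0}))\bmod 2\subseteq\GL(g-1;\Z/2\Z)$; and because $\gcd(d,2)=1$ the reduction $\G_d(g-1)\to\GL(g-1;\Z/2\Z)$ is surjective, so $\Phi(\M_d(N_{g,0}))$ is the preimage of $\overline{G}$ in $\G_d(g-1)$ and is proper (resp.\ non-normal) in $\G_d(g-1)$ exactly when $\overline{G}$ is proper (resp.\ non-normal) in $\GL(g-1;\Z/2\Z)$. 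The main work, and the step I expect to be the main obstacle, is therefore to pin down $\overline{G}$ and show it is proper and non-normal. I would do this from the explicit values of $\Phi$ on the Dehn twist and crosscap slide generators in Example~\ref{dehn-crosscap} together with the preserved intersection pairing: the latter confines $\overline{G}$ to the orthogonal group of the nondegenerate form induced on $\alpha_{1,2,\dots,g}^{\perp}$ when $g$ is odd, and to the stabilizer of the induced degenerate form (equivalently of the functional $\sum_i\alpha_i^{\ast}$) when $g$ is even, each proper in $\GL(g-1;\Z/2\Z)$ for $g\ge4$. Non-normality of $\overline{G}$ would then be witnessed by conjugating a suitable element of $\overline{G}$ by some $P\in\GL(g-1;\Z/2\Z)$ to a matrix that fails to preserve this form, hence lies outside $\overline{G}$; by the surjectivity above this $P$ lifts to $\G_d(g-1)$ and shows $\Phi(\M_d(N_{g,0}))$ is non-normal there.
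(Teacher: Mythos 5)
Your overall strategy is sound and in places genuinely cleaner than the paper's. For $d$ even you avoid the paper's main computation entirely: the paper proves $\Phi(\M_d(N_{g,0}))\supseteq\G_d(g-1)$ by taking the explicit generating set of $\G_d(g-1)$ from the reference on congruence subgroups and exhibiting, for each generator, a product of Dehn twists and crosscap slides mapping onto it, whereas you get the same inclusion formally from $\M_d(N_{g,0})=\M_2(N_{g,0})\cap\Phi^{-1}(\G_d(g-1))$ together with the known equality $\Phi(\M_2(N_{g,0}))=\GH_2(g-1)$. (The paper needs those explicit preimages later, in the proof of Theorem~\ref{main-2}, so it does not lose anything by computing them here; but for Theorem~\ref{main-1} alone your route is shorter.) Your part (2) is essentially the paper's argument, reorganized through the crossed homomorphism $v$, and is complete. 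Your mod-$2$ framework for $d$ odd also repairs a weakness in the paper's own non-normality step: the paper conjugates $\Phi(t_{\alpha_{1,2}}^d)=e_{21}e_{12}^de_{21}^{-1}$ by $e_{21}$, which lies in $\SL(g-1;\Z)$ but not in $\G_d(g-1)$, whereas in your setup $\Phi(\M_d(N_{g,0}))$ is the full preimage of $\overline{G}$ under the surjection $\G_d(g-1)\to\GL(g-1;\Z/2\Z)$, so non-normality of $\overline{G}$ lifts to non-normality witnessed by a conjugator genuinely inside $\G_d(g-1)$.

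The gap is exactly where you say it is: you never actually establish that $\overline{G}$ is proper and non-normal in $\GL(g-1;\Z/2\Z)$, and without that part (1) for $d$ odd is not proved. Two remarks on closing it. First, properness requires a concrete witness, and the natural one is $e_{12}$: the paper's case analysis (testing the two possible lifts $d\alpha_1+\alpha_2$ and $d\alpha_1+\alpha_2+\alpha_{1,2,\dots,g}$ of $e_{12}^d(\alpha_2)$, then $f(\alpha_3)$, against preservation of $\star$, splitting on the parity of $g$) is precisely the verification that $e_{12}\notin\overline{G}$, and some such computation is unavoidable — your appeal to "the orthogonal group of the induced form" needs care when $g$ is even, since $\alpha_{1,2,\dots,g}$ is then isotropic and the form does not descend to the quotient; your fallback to the stabilizer of the functional $\sum_i\alpha_i^{\ast}$ is the correct fix, but it must be checked that $e_{12}$ (or some element) violates it. Second, once properness is known, non-normality of $\overline{G}$ is immediate without any search for a conjugating pair: $\GL(g-1;\Z/2\Z)$ is simple for $g-1\geq3$, and $\overline{G}$ is nontrivial (it contains the transposition matrix $\Phi(t_{\alpha_{1,2}}^d)\bmod2$), so a proper nontrivial subgroup cannot be normal. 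With those two points supplied, your argument is complete.
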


\begin{proof}
\begin{enumerate}
\item	For $f\in\M_d(N_{g,0})$, we can denote $f(\alpha_j)=\sum_{i=1}^gc_{ij}\alpha_i$, where $c_{jj}\equiv1\pmod{d}$ and $c_{ij}\equiv0\pmod{d}$ if $i\neq{j}$.
	Suppose that $c_{gj}$ is according to Remark~\ref{hom-pre}.
	For $1\leq{i,j}\leq{g-1}$, since $c_{jj}-c_{gj}\equiv1\pmod{d}$ and $c_{ij}-c_{gj}\equiv0\pmod{d}$ if $i\neq{j}$, we have $\Phi(f)\in\G_d(g-1)$.
	Hence $\Phi(\M_d(N_{g,0}))\subset\G_d(g-1)$.
	
	First, we show the case where $d$ is even.
	For $1\leq{i,j}\leq{g-1}$ with $i\neq{}j$, let $e_{ij}$ be the matrix whose $(i,j)$ entry and the diagonal entries are $1$ and the other entries are $0$.
	$\G_d(g-1)$ is generated by $e_{ij}^d$ and $e_{k1}e_{1k}^de_{k1}^{-1}$ for $1\leq{i,j}\leq{}g-1$ and $2\le{k}\le{g-1}$ with $i\neq{}j$, if $g\geq4$ (see \cite{IK2}).
	When $i<j$, we calculate
	\begin{eqnarray*}
	&&\Phi\left([t_{\alpha_{j,g}},Y_{\alpha_i,\alpha_{i,j}}]^\frac{d}{2}\right)=\left(e_{ij}^2\right)^\frac{d}{2}=e_{ij}^d,\\
	&&\Phi\left([t_{\alpha_{i,g}},Y_{\alpha_j,\alpha_{i,j}}]^\frac{d}{2}\right)=\left(e_{ji}^2\right)^\frac{d}{2}=e_{ji}^d,\\
	&&\Phi\left(t_{\alpha_{1,k}}^d\right)=\left(e_{k1}e_{1k}e_{k1}^{-1}\right)^d=e_{k1}e_{1k}^de_{k1}^{-1}
	\end{eqnarray*}
	(see Example~\ref{dehn-crosscap} or Figures~\ref{f}~(b) and (c)).
	Therefore the claim is obtained.
	
	Next, we show the case where $d$ is odd.
	It is known that any automorphism of $H_1(N_{g,0};\Z)$ induced from an element of $\M(N_{g,0})$ preserves the mod $2$ intersection form
	$$\star:H_1(N_{g,0};\Z)\times{}H_1(N_{g,0};\Z)\to\Z/2\Z$$
	(see \cite{MP,GP}).
	Suppose that there is $f\in\M(N_{g,0})$ such that $\Phi(f)=e_{12}^d$.
	If $f(\alpha_2)=d\alpha_1+\alpha_2$, since $f(\alpha_2)\star{}f(\alpha_2)\equiv{}d^2+1\equiv0\not\equiv\alpha_2\star\alpha_2\pmod2$, $f$ does not preserve $\star$.
	Hence $f(\alpha_2)=d\alpha_1+\alpha_2+\alpha_{1,2,\dots,g}$.
	If $f(\alpha_3)=\alpha_3$, since $f(\alpha_2)\star{}f(\alpha_3)\equiv1\not\equiv\alpha_2\star\alpha_3\pmod2$, $f$ does not preserve $\star$.
	Hence $f(\alpha_3)=\alpha_3+\alpha_{1,2,\dots,g}$.
	However, if $g$ is even, $f(\alpha_2)\star{}f(\alpha_2)\equiv0\not\equiv\alpha_2\star\alpha_2\pmod2$, and if $g$ is odd, $f(\alpha_3)\star{}f(\alpha_3)\equiv0\not\equiv\alpha_3\star\alpha_3\pmod2$, and so $f$ does not preserve $\star$.
	Hence there is no $f\in\M(N_{g,0})$, in particular no $f\in\M_d(N_{g,0})$, such that $\Phi(f)=e_{12}^d$, and so $\Phi(\M_d(N_{g,0}))\subsetneq\G_d(g-1)$.
	On the other hand, since $\Phi\left(t_{\alpha_{1,2}}^d\right)=e_{21}e_{12}^de_{21}^{-1}$ is conjugate to $e_{12}^d$, $\Phi(\M_d(N_{g,0}))$ is a non-normal subgroup of $\G_d(g-1)$.
	Therefore the claim is obtained.
\item	It is clear that $\ker\Phi|_{\M_d(N_{g,0})}\supset\I(N_{g,0})$.
	When $g$ is even, by Example~\ref{dehn-crosscap}, $t_{\alpha_{1,2,\dots,g}}^d$ is in $\ker\Phi|_{\M_d(N_{g,0})}$.
	Hence we have $\ker\Phi|_{\M_d(N_{g,0})}\supset\I(N_{g,0})\langle{t_{\alpha_{1,2,\dots,g}}^d}\rangle$ when $g$ is even.
	Note that for $f\in\ker\Phi|_{\M_d(N_{g,0})}$, the homology class $f(\alpha_i)$ is either $\alpha_i$ or $\alpha_i+\alpha_{1,2,\dots,g}$ if $d$ is even, and either $\alpha_i$ or $\alpha_i+d\alpha_{1,2,\dots,g}$ if $d$ is odd, by Remerk~\ref{hom-pre}.
	
	First, we show that the case where $g$ is odd or $d$ is even.
	When $g$ is odd, for $f\in\ker\Phi|_{\M_d(N_{g,0})}$, we see that $f(\alpha_i)\star{}f(\alpha_i)\equiv1$ or $0$ if $f(\alpha_i)=\alpha_i$ or $\alpha_i+\alpha_{1,2,\dots,g}$, respectively. 
	Hence $f(\alpha_i)\star{}f(\alpha_i)\equiv\alpha_i\star\alpha_i\pmod2$ if and only if $f(\alpha_i)=\alpha_i$, and so we have $f\in\I(N_{g,0})$.
	When $d$ is even, since $\I(N_{g,0})\subset\M_d(N_{g,0})\subset\M_2(N_{g,0})$, we have $\ker\Phi|_{\M_d(N_{g,0})}=\ker\Phi|_{\M_2(N_{g,0})}=\I(N_{g,0})$ by \cite{MP,HK}.
	Therefore the claim is obtained.
	
	Next, we show that the case where $g$ is even and $d$ is odd.
	Let $f\in\ker\Phi|_{\M_d(N_{g,0})}$.
	If $f(\alpha_i)=\alpha_i$ for any $1\le{i}\le{g}$, we have $f\in\I(N_{g,0})$.
	Suppose that there is $1\le{i}\le{g}$ such that $f(\alpha_i)=\alpha_i+d\alpha_{1,2,\dots,g}$.
	If there is $1\le{j}\le{g}$ with $j\ne{i}$ such that $f(\alpha_j)=\alpha_j$, since $f(\alpha_i)\star{}f(\alpha_j)\equiv1\not\equiv\alpha_i\star\alpha_j\pmod2$, $f$ does not preserve $\star$.
	Hence we have $f(\alpha_j)=\alpha_j+d\alpha_{1,2,\dots,g}$ for any $1\le{j}\le{g}$.
	Since the homology classes $f(\alpha_i)$ and $t_{\alpha_{1,2,\dots,g}}^d(\alpha_i)$ are equal for any $1\le{i}\le{g}$, we have $ft_{\alpha_{1,2,\dots,g}}^{-d}\in\I(N_{g,0})$, and hence $f\in\I(N_{g,0})\langle{t_{\alpha_{1,2,\dots,g}}^d}\rangle$.
	Therefore the claim is obtained.
\end{enumerate}
Thus we complete the proof.
\end{proof}

\begin{rem}
By the argument similar to the proof of Theorem~\ref{main-1}, we have the followings.
\begin{enumerate}
\item	$\Phi(\M(N_{g,0}))$ and $\Phi(\T(N_{g,0}))$ are non-normal subgroups of $\GL(g-1)$ and $\SL(g-1)$ respectively.
\item	$\ker\Phi=\I(N_{g,0})$ or $\I(N_{g,0})\langle{t_{\alpha_{1,2,\dots,g}}}\rangle$ if $g$ is odd or even respectively.
\end{enumerate}
\end{rem}

\section{A normal generating set for $\M_d(N_{g,n})$}\label{3}

The level $2$ mapping class group for an orientable closed surface can be generated by squares of Dehn twists about non-separating simple closed curves.
For $d\ge3$, the level $d$ mapping class group for an orientable closed surface can be generated by $d$-th powers of Dehn twists about non-separating simple closed curves and Dehn twists about separating simple closed curves (see \cite{Mc}).
Let $\beta_{i,j}$, $\gamma$, $\delta_k$, $\epsilon_{i,k}$, $\zeta_{k,l}$ and $\bar{\zeta}_{k,l}$ be simple closed curves as shown in Figure~\ref{beta-gamma-delta-epsilon-zeta}.
In this section, we prove the following theorem.

\begin{thm}\label{main-2}
For $g\geq4$, $n\ge0$ and $d\geq2$, in $\M(N_{g,n})$, $\M_d(N_{g,n})$ is normally generated by
\begin{itemize}
\item	$Y_{\alpha_1,\alpha_{1,2}}$ only if $d=2$,
\item	$t_{\alpha_{1,2}}^d$ only if $d$ is odd or $n\ge1$ and $d\ne2$,
\item	$t_{\alpha_{1,2,3,4}}^d$ only if $d$ is odd and $g=4$,
\item	$\left(t_{\alpha_{1,2}}t_{\alpha^\prime_{1,2}}^{-1}\right)^\frac{d}{2}$ only if $d\ge4$ is even,
\item	$t_{\alpha_{1,2,3,4}}t_{\alpha^\prime_{1,2,3,4}}$, $t_{\beta_{1,2}}$ only if $d\ge3$,
\item	$t_\gamma$ only if $g=4$ and $d\ge3$,
\item	$t_{\delta_k}$, $t_{\epsilon_{g,k}}$ for $1\le{k}\le{n-1}$, only if $n\ge2$,
\item	$t_{\zeta_{k,l}}$ and $t_{\bar{\zeta}_{k,l}}$ for $1\le{k<l}\le{n-1}$, only if $n\ge3$,
\end{itemize}
where $\alpha^\prime_{1,2,3,4}=Y_{\alpha_2,\alpha_{1,2}}Y_{\alpha_3,\alpha_{3,4}}^{-1}(\alpha_{1,2,3,4})$ and $\alpha^\prime_{1,2}=Y_{\alpha_3,\alpha_{2,3}}(\alpha_{1,2})$.
\end{thm}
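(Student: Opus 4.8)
The plan is to prove the closed case $n=0$ first and then reduce the case $n\ge1$ to it. The whole argument rests on the fact that $\M_d(N_{g,n})$ is a \emph{normal} subgroup of $\M(N_{g,n})$, being the kernel of the action on $H_1(N_{g,n};\Z/d\Z)$. Consequently, for any subset $S\subseteq\M_d(N_{g,0})$ its normal closure in $\M(N_{g,0})$, which I denote $\langle\!\langle S\rangle\!\rangle$, is automatically contained in $\M_d(N_{g,0})$, and a short diagram chase shows that $\langle\!\langle S\rangle\!\rangle=\M_d(N_{g,0})$ if and only if (i) $\langle\!\langle \Phi(S)\rangle\!\rangle=\Phi(\M_d(N_{g,0}))$ inside $\Phi(\M(N_{g,0}))$, and (ii) $\langle\!\langle S\rangle\!\rangle\supseteq\ker\Phi|_{\M_d(N_{g,0})}$. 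I would therefore split $S$ into an ``image part'' handling (i) and a ``kernel part'' handling (ii), reading the two targets off from Theorem~\ref{main-1}.

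For the image part the target is $\Phi(\M_d(N_{g,0}))$; by Theorem~\ref{main-1}(1) this is $\G_d(g-1)$ for even $d\ge4$, and for odd $d$ it is the proper, non-normal subgroup of $\G_d(g-1)$ preserving the mod-$2$ intersection form. In either case I would use the generating set of $\G_d(g-1)$ from \cite{IK2}, namely the elementary powers $e_{ij}^d$ together with the conjugates $e_{k1}e_{1k}^de_{k1}^{-1}$. By Example~\ref{dehn-crosscap} and the computations in the proof of Theorem~\ref{main-1}, each of these is the $\Phi$-image of a $d$-th power of a Dehn twist of type $\alpha_{i,j}$ (for instance $\Phi(t_{\alpha_{1,k}}^d)=e_{k1}e_{1k}^de_{k1}^{-1}$), and any two such twists are conjugate in $\M(N_{g,0})$ because $\M(N_{g,0})$ acts transitively on the relevant curves, so the normal closure of a single suitable element realizes all of them. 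For $d$ odd I would take $t_{\alpha_{1,2}}^d$, whose image $e_{21}e_{12}^de_{21}^{-1}$ is a conjugate of $e_{12}^d$; for even $d$ I would take $(t_{\alpha_{1,2}}t_{\alpha^\prime_{1,2}}^{-1})^{d/2}$, which via $t_{\alpha_{1,2}}t_{\alpha^\prime_{1,2}}^{-1}=[t_{\alpha_{1,2}},Y_{\alpha_3,\alpha_{2,3}}]$ and the commutator computation in the proof of Theorem~\ref{main-1} maps to a clean elementary power $e_{ij}^d$. In the bordered case $n\ge1$ the plain power $t_{\alpha_{1,2}}^d$ serves for every $d$, since $H_1$ is then torsion-free and the mod-$2$ subtlety disappears.

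For the kernel part, Theorem~\ref{main-1}(2) identifies $\ker\Phi|_{\M_d(N_{g,0})}$ as the Torelli group $\I(N_{g,0})$, except when $g$ is even and $d$ is odd, where it is $\I(N_{g,0})\langle t_{\alpha_{1,2,\dots,g}}^d\rangle$. I would invoke a normal generating set of $\I(N_{g,0})$ by separating twists and bounding-pair--type products and match it with $t_{\alpha_{1,2,3,4}}t_{\alpha^\prime_{1,2,3,4}}$, $t_{\beta_{1,2}}$ and, for $g=4$, $t_\gamma$; here $\alpha^\prime_{1,2,3,4}$ is homologous to $\alpha_{1,2,3,4}$ by the computation in Example~\ref{dehn-crosscap}, so $t_{\alpha_{1,2,3,4}}t_{\alpha^\prime_{1,2,3,4}}$ indeed lies in $\I(N_{g,0})$. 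The extra cyclic factor present when $g$ is even and $d$ is odd is supplied by $t_{\alpha_{1,2,3,4}}^d$ in the case $g=4$, where $\alpha_{1,2,3,4}=\alpha_{1,2,\dots,g}$; for even $g>4$ I expect $t_{\alpha_{1,2,\dots,g}}^d$ to be expressible through conjugates of the remaining generators using the additional crosscaps, and verifying this is one of the delicate points.

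Finally, for $n\ge1$ I would induct on $n$ via the capping homomorphism $\M_d(N_{g,n})\to\M_d(N_{g,n-1})$ obtained by gluing a disk to a boundary component; it is compatible with the level structure, and by the Birman exact sequence its kernel is generated by the boundary Dehn twist and point-pushing maps, which are carried by the separating twists $t_{\delta_k}$, $t_{\epsilon_{g,k}}$ (for $n\ge2$) and $t_{\zeta_{k,l}}$, $t_{\bar{\zeta}_{k,l}}$ (for $n\ge3$). All of these lie in $\M_d(N_{g,n})$ as twists about separating curves. The main obstacle I anticipate is the kernel step: producing and verifying a usable normal generating set of the non-orientable Torelli group and confirming that the specific curves $\alpha^\prime$, $\beta_{1,2}$, $\gamma$ (and the boundary relations for $n\ge1$) normally generate it, rather than a proper subgroup. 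A secondary delicate point is the behaviour at $d=2$, where $\T_d$ is a proper index-two subgroup of $\M_d$ and the image target is $\GH_2(g-1)\supsetneq\G_2(g-1)$, so the determinant-$(-1)$ coset detected by crosscap slides must be accounted for separately; by contrast, once Theorem~\ref{main-1} and the \cite{IK2} generators are in hand, the image step is essentially bookkeeping.
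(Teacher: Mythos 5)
Your overall frame---split into an image part over $\Phi$ and a kernel part equal to $\ker\Phi|_{\M_d}$, then induct on $n$ via capping---matches the paper for even $d$ and $n=0$, but it breaks down exactly where you wave at the odd case. For odd $d$, Theorem~\ref{main-1}(1) says $\Phi(\M_d(N_{g,0}))$ is a \emph{proper, non-normal} subgroup of $\G_d(g-1)$, and the proof of that theorem shows explicitly that $e_{12}^d$ is \emph{not} in the image (no mapping class realizing it preserves the mod $2$ intersection form). So your plan to ``use the generating set of $\G_d(g-1)$ from \cite{IK2}, namely the elementary powers $e_{ij}^d$ together with the conjugates $e_{k1}e_{1k}^de_{k1}^{-1}$'' cannot work: those elements do not all lie in the target subgroup, and you have no generating set for the actual image. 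The paper sidesteps this entirely by a different decomposition in the odd case: it uses the epimorphism $\Psi:\M(N_{g,0})\to O_2(g)$ with kernel $\M_2(N_{g,0})$, notes $\M_d\cap\M_2=\M_{2d}$, and gets $1\to\M_{2d}(N_{g,0})\to\M_d(N_{g,0})\to O_2(g)\to1$, reducing odd $d$ to the already-settled even level $2d$ plus lifts $t_{\alpha_{i,i+1}}^d$, $t_{\alpha_{1,2,3,4}}^d$ of generators of $O_2(g)$. This same device disposes of your second admitted gap (expressing $t_{\alpha_{1,2,\dots,g}}^d$ for even $g>4$, odd $d$): for $g\ge5$ there is $f_1$ with $f_1(\alpha_{1,2})=\alpha_{1,2,3,4}$, so $t_{\alpha_{1,2,3,4}}^d$ is conjugate to $t_{\alpha_{1,2}}^d$, and only for $g=4$ does it survive as a separate generator.

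A secondary gap is the $n\ge1$ step. The kernel of the capping/forgetful composite restricted to $\M_d(N_{g,n})$ is not the full point-pushing subgroup together with $t_{\delta_n}$: a point-push $\PP(x)$ lies in (a lift of) $\M_d$ only when $x\in\ker\theta$ for the homomorphism $\theta:\pi_1^+(N_{g,n-1},\ast)\to(\Z/d\Z)^g$, and the normal generators of $\ker\theta$ include the $d$-th powers $(x_ix_g)^d$, whose images are $t_{\alpha_{i,g}}^{-d}t_{\alpha_{i;n}}^d$. This is precisely where the generator $t_{\alpha_{1,2}}^d$ enters for $n\ge1$ even when $d$ is even; your sketch, which only mentions boundary twists and ``point-pushing maps carried by separating twists,'' misses these contributions and does not explain why the listed generating set is complete. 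Your even-$d$, $n=0$ analysis (including the observation that $t_{\alpha_{1,2}}^d$ is a product of conjugates of $(t_{\alpha_{1,2}}t_{\alpha^\prime_{1,2}}^{-1})^{d/2}$, implicit in your choice of generators) and your use of the Torelli normal generators from \cite{HK} are consistent with the paper.
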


\begin{figure}[htbp]
\includegraphics{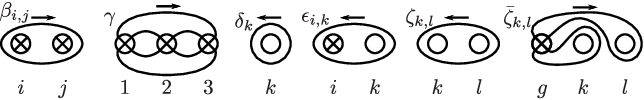}
\caption{Simple closede curves $\beta_{i,j}$, $\gamma$, $\delta_k$, $\epsilon_{i,k}$, $\zeta_{k,l}$ and $\bar{\zeta}_{k,l}$.}\label{beta-gamma-delta-epsilon-zeta}
\end{figure}

\subsection{Proof of Theorem~\ref{main-2} with $n=0$}\label{3.1}\

In this subsection, we prove Theorem~\ref{main-2} with $n=0$.
It is known that $\M_2(N_{g,0})$ is normally generated by only $Y_{\alpha_1,\alpha_{1,2}}$ (see \cite{Sz1}).
Hence we consider the case $d\ge3$.

First, we show the case where $d$ is even.
By Theorem~\ref{main-1}, we have the short exact sequence
$$1\to\I(N_{g,0})\to\M_d(N_{g,0})\to\G_d(g-1)\to1$$
if $d$ is even.
Hirose and the author \cite{HK} proved that for $g\ge4$, $\I(N_{g,0})$ is normally generated by $t_{\alpha_{1,2,3,4}}t_{\alpha^\prime_{1,2,3,4}}$, $t_{\beta_{1,2}}$ and $t_\gamma$ (only if $g=4$) in $\M(N_{g,0})$.
Hence, by the proof of Theorem~\ref{main-1}~(1), in $\M(N_{g,0})$, $\M_d(N_{g,0})$ is normally generated by $t_{\alpha_{1,2,3,4}}t_{\alpha^\prime_{1,2,3,4}}$, $t_{\beta_{1,2}}$, $t_\gamma$ (only if $g=4$), $[t_{\alpha_{j,g}},Y_{\alpha_i,\alpha_{i,j}}]^\frac{d}{2}$, $[t_{\alpha_{i,g}},Y_{\alpha_j,\alpha_{i,j}}]^\frac{d}{2}$ and $t_{\alpha_{1,k}}^d$ for $1\leq{i<j}\leq{}g-1$ and $2\le{k}\le{g-1}$.
We see
\begin{eqnarray*}
\left[t_{\alpha_{j,g}},Y_{\alpha_i,\alpha_{i,j}}\right]
&=&
t_{\alpha_{j,g}}Y_{\alpha_i,\alpha_{i,j}}t_{\alpha_{j,g}}^{-1}Y_{\alpha_i,\alpha_{i,j}}^{-1}=t_{\alpha_{j,g}}t_{Y_{\alpha_i,\alpha_{i,j}}(\alpha_{j,g})}^{-1},\\
\left[t_{\alpha_{i,g}},Y_{\alpha_j,\alpha_{i,j}}\right]
&=&
t_{\alpha_{i,g}}Y_{\alpha_j,\alpha_{i,j}}t_{\alpha_{i,g}}^{-1}Y_{\alpha_j,\alpha_{i,j}}^{-1}=t_{\alpha_{i,g}}t_{Y_{\alpha_j,\alpha_{i,j}}(\alpha_{i,g})}^{-1}.
\end{eqnarray*}
In addition, there are $f_1$, $f_2$ and $f_3\in\M(N_g)$ such that $f_1(\alpha_{1,2})=\alpha_{1,k}$, $f_2(\alpha_{1,2})=\alpha_{j,g}$, $f_2(\alpha^\prime_{1,2})=Y_{\alpha_i,\alpha_{i,j}}(\alpha_{j,g})$, $f_3(\alpha_{1,2})=\alpha_{i,g}$ and $f_3(\alpha^\prime_{1,2})=Y_{\alpha_j,\alpha_{i,j}}(\alpha_{i,g})$ (see Figures~\ref{f}~(a), (b) and (c)).
Hence we have
\begin{eqnarray*}
t_{\alpha_{1,k}}^d
&=&
t_{f_1(\alpha_{1,2})}^d=f_1t_{\alpha_{1,2}}^{\pm{}d}f_1^{-1},\\
\left(t_{\alpha_{j,g}}t_{Y_{\alpha_i,\alpha_{i,j}}(\alpha_{j,g})}^{-1}\right)^\frac{d}{2}
&=&
\left(t_{f_2(\alpha_{1,2})}t_{f_2(\alpha^\prime_{1,2})}^{-1}\right)^\frac{d}{2}=f_2\left(t_{\alpha_{1,2}}t_{\alpha^\prime_{1,2}}^{-1}\right)^{\pm\frac{d}{2}}f_2^{-1},\\
\left(t_{\alpha_{i,g}}t_{Y_{\alpha_j,\alpha_{i,j}}(\alpha_{i,g})}^{-1}\right)^\frac{d}{2}
&=&
\left(t_{f_3(\alpha_{1,2})}t_{f_3(\alpha^\prime_{1,2})}^{-1}\right)^\frac{d}{2}=f_3\left(t_{\alpha_{1,2}}t_{\alpha^\prime_{1,2}}^{-1}\right)^{\pm\frac{d}{2}}f_3^{-1}.
\end{eqnarray*}
Moreover, we have
$$t_{\alpha_{1,2}}^d=\left(t_{\alpha_{1,2}}t_{\alpha^\prime_{1,2}}^{-1}\right)^\frac{d}{2}\left(t_{\alpha_{1,2;3}}t_{\alpha_{1,2;4}}^{-1}\right)^\frac{d}{2}\left(t_{\alpha_{1,2;4}}t_{\alpha_{1,2;5}}^{-1}\right)^\frac{d}{2}\cdots\left(t_{\alpha_{1,2;g-1}}t_{\alpha_{1,2;g}}^{-1}\right)^\frac{d}{2},$$
where $\alpha_{1,2;l}$ is a simple closed curve as shown in Figure~\ref{f}~(d), and there is $f_l\in\M(N_g)$ such that $f_l(\alpha_{1,2})=\alpha_{1,2;l-1}$ and $f_l(\alpha^\prime_{1,2})=\alpha_{1,2;l}$ for $4\le{l}\le{g}$ (see Figure~\ref{f}~(d)).
Hence we have
$$\left(t_{\alpha_{1,2;l-1}}t_{\alpha_{1,2;l}}^{-1}\right)^\frac{d}{2}=\left(t_{f_l(\alpha_{1,2})}t_{f_l(\alpha^\prime_{1,2})}^{-1}\right)^\frac{d}{2}=f_l\left(t_{\alpha_{1,2}}t_{\alpha^\prime_{1,2}}^{-1}\right)^{\pm\frac{d}{2}}f_l^{-1}.$$
Therefore we obtain the claim.

\begin{figure}[htbp]
\subfigure[$f_1(\alpha_{1,2})=\alpha_{1,k}$.]{\includegraphics{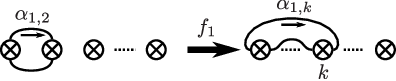}}\\
\subfigure[$f_2(\alpha_{1,2})=\alpha_{j,g}$ and $f_2(\alpha^\prime_{1,2})=Y_{\alpha_i,\alpha_{i,j}}(\alpha_{j,g})$.]{\includegraphics{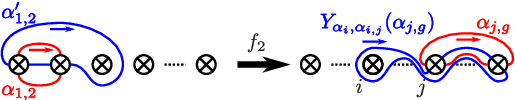}}\\
\subfigure[$f_3(\alpha_{1,2})=\alpha_{i,g}$ and $f_3(\alpha^\prime_{1,2})=Y_{\alpha_j,\alpha_{i,j}}(\alpha_{i,g})$.]{\includegraphics{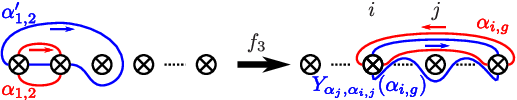}}\\
\subfigure[$f_l(\alpha_{1,2})=\alpha_{1,2;l-1}$ and $f_l(\alpha^\prime_{1,2})=\alpha_{1,2;l}$.
Note that $t_{\alpha_{1,2;3}}=t_{\alpha^\prime_{1,2}}$ and $t_{\alpha_{1,2;g}}^{-1}=t_{\alpha_{1,2}}$.]{\includegraphics{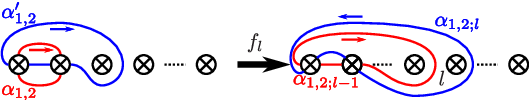}}
\caption{}\label{f}
\end{figure}

Next, we show the case where $d$ is odd.
We note that $H_1(N_{g,0};\Z/2\Z)$ is isomorphic to $\left(\Z/2\Z\right)^{g}$.
Let $O_2(g)$ denote the orthogonal group on $\Z/2\Z$ of rank $g$.
The natural action on $H_1(N_{g,0};\Z/2\Z)$ by $\M(N_{g,0})$ induces the epimorphism $\Psi:\M(N_{g,0})\to{}O_2(g)$ whose kernel is $\M_2(N_{g,0})$ (see \cite{GP}).
Since $\M(N_{g,0})$ is generated by $t_{\alpha_{i,i+1}}$, $t_{\alpha_{1,2,3,4}}$ and $Y_{\alpha_1,\alpha_{1,2}}$ for $1\le{i}\le{g-1}$ (see \cite{Sz2}), $O_2(g)$ is generated by $\Psi(t_{\alpha_{i,i+1}})$ and $\Psi(t_{\alpha_{1,2,3,4}})$ for $1\le{i}\le{g-1}$.
In addition, since $\Psi(t_{\alpha_{i,i+1}}^d)=\Psi(t_{\alpha_{i,i+1}})$ and $\Psi(t_{\alpha_{1,2,3,4}}^d)=\Psi(t_{\alpha_{1,2,3,4}})$, we have $\Psi(\M_d(N_{g,0}))=O_2(g)$.
Moreover, since $\M_d(N_{g,0})\cap\M_2(N_{g,0})=\M_{2d}(N_{g,0})$, we have the short exact sequence
$$1\to\M_{2d}(N_{g,0})\to\M_d(N_{g,0})\to{}O_2(g)\to1.$$
Hence $\M_d(N_{g,0})$ is normally generated by $t_{\alpha_{1,2,3,4}}t_{\alpha^\prime_{1,2,3,4}}$, $t_{\beta_{1,2}}$, $t_\gamma$ (only if $g=4$), $\left(t_{\alpha_{1,2}}t_{\alpha^\prime_{1,2}}^{-1}\right)^d$, $t_{\alpha_{i,i+1}}^d$ and $t_{\alpha_{1,2,3,4}}^d$ for $1\le{i}\le{g-1}$.
There is $f_i\in\M(N_{g,0})$ such that $f_i(\alpha_{1,2})=\alpha_{i,i+1}$ and $f_1(\alpha_{1,2})=\alpha_{1,2,3,4}$ (if $g\ge5$) for $2\le{i}\le{g-1}$.
Hence we have
\begin{eqnarray*}
\left(t_{\alpha_{1,2}}t_{\alpha^\prime_{1,2}}^{-1}\right)^d
&=&t_{\alpha_{1,2}}^dt_{\alpha^\prime_{1,2}}^{-d}=t_{\alpha_{1,2}}^d(Y_{\alpha_3,\alpha_{2,3}}t_{\alpha_{1,2}}^{-d}Y_{\alpha_3,\alpha_{2,3}}^{-1}),\\
t_{\alpha_{i,i+1}}^d
&=&t_{f_i(\alpha_{1,2})}^d=f_it_{\alpha_{1,2}}^{\pm{}d}f_i^{-1},\\
t_{\alpha_{1,2,3,4}}^d
&=&t_{f_1(\alpha_{1,2})}^d=f_1t_{\alpha_{1,2}}^{\pm{}d}f_1^{-1}.
\end{eqnarray*}
Note that there is no $f\in\M(N_{4,0})$ such that $f(\alpha_{1,2})=\alpha_{1,2,3,4}$, more generally,  there is no $f\in\M(N_{g,0})$ such that $f(\alpha_{1,2})=\alpha_{1,2,\dots,g}$ for $g\ge3$.
Therefore we obtain the claim.

Thus we complete the proof.

\begin{rem}
$\T_2(N_{g,0})$ can be normally generated by $t_{\alpha_{1,2}}t_{\alpha^\prime_{1,2}}^{-1}$ in $\M(N_{g,0})$, for $g=3$ or $g\ge5$, and $\T_2(N_{4,0})$ can be normally generated by $t_{\alpha_{1,2}}t_{\alpha^\prime_{1,2}}^{-1}$ and $t_{\alpha_{1,2,3,4}}^2$ in $\M(N_{4,0})$ (see \cite{KO1}).
In addition, $\T_2(N_{g,0})$ can not be generated by only $2$nd powers of Dehn twists about non-separating simple closed curves and Dehn twists about separating simple closed curves for $g\ge4$ (see \cite{IK1}).
We do not know whether or not $\M_d(N_{g,0})$ can be normally generated by $t_{\alpha_{1,2}}^d$, $t_{\alpha_{1,2,3,4}}^d$ (only if $g=4$), $t_{\alpha_{1,2,3,4}}t_{\alpha^\prime_{1,2,3,4}}$, $t_{\beta_{1,2}}$ and $t_\gamma$ (only if $g=4$) in $\M(N_{g,0})$, for even $d\ge4$.
\end{rem}

\subsection{Proof of Theorem~\ref{main-2} with $n\ge1$}\label{3.2}\

In this subsection, we prove Theorem~\ref{main-2} with $n\ge1$.

Put a interier point $\ast\in{}N_{g,n-1}$.
Let $\M(N_{g,n-1},\ast)$ denote the group consisting of isotopy classes of diffeomorphisms of $N_{g,n-1}$ fixing $\ast$ and boundary point wise.
$\I(N_{g,n-1},\ast)$ and $\M_d(N_{g,n-1},\ast)$ are the subgroups of $\M(N_{g,n-1},\ast)$ acting trivially on $H_1(N_{g,n-1};\Z)$ and $H_1(N_{g,n-1};\Z/d\Z)$, respectively.
We regard $N_{g,n}$ as a surface obtained by removing an open disk neighborhood of $\ast$ from $N_{g,n-1}$.
Then we can define homomorphisms
\begin{eqnarray*}
&&\PP:\pi_1(N_{g,n-1},\ast)\to\M(N_{g,n-1},\ast),\\
&&\FF:\M(N_{g,n-1},\ast)\to\M(N_{g,{n-1}}),\\
&&\CC:\M(N_{g,n})\to\M(N_{g,n-1},\ast),
\end{eqnarray*}
called the \textit{point pushing map}, the \textit{forgetful map} and the \textit{capping map}, respectively (for detail, for instance see \cite{FM, Ko, KO2, IK1, KO3}).

$\pi_1(N_{g,n-1},\ast)$ is generated by $x_i$ and $y_k$ for $1\le{i}\le{g}$ and $1\le{k}\le{n-1}$, where $x_i$ and $y_k$ are oriented simple loops based at $\ast$ as shown in Figure.
Let $\pi_1^+(N_{g,n-1},\ast)$ be the subgroup of $\pi_1(N_{g,n-1},\ast)$ generated by two sided loops.
It is known that $\pi_1^+(N_{g,n-1},\ast)$ is an index $2$ subgroup of $\pi_1(N_{g,n-1},\ast)$ and is generated by $x_ix_g$, $x_i^2$, $y_k$ and $z_k=x_gy_kx_g^{-1}$ for $1\le{i}\le{g}$ and $1\le{k}\le{n-1}$ (see \cite{Ko,KO2}).

\begin{figure}[htbp]
\includegraphics{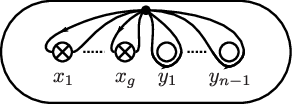}
\caption{Generators $x_1$, $\dots$, $x_g$ and $y_1$, $\dots$, $y_{n-1}$ of $\pi_1(N_{g,n-1},\ast)$.}\label{pi_1}
\end{figure}

For $x\in\pi_1(N_{g,n-1},\ast)$, $\PP(x)$ is in $\CC(\M(N_{g,n}))$ if and only if $x$ is in $\pi_1^+(N_{g,n-1},\ast)$ (see \cite{Ko}).
For $x\in\pi_1^+(N_{g,n-1},\ast)$, let $x_\star$ be the automorphism of $H_1(N_{g,n};\Z/d\Z)$ induced from any lift by $\CC$ of $\PP(x)$.
Note that $\PP(x)$ is in $\I(N_{g,n-1},\ast)$,  and so $\M_d(N_{g,n-1},\ast)$.
Hence for $1\le{i}\le{g}$, there is $m_i\in\Z/d\Z$ such that $x_\star(\alpha_i)=\alpha_i+m_i\delta_n$.
Then we can define the homomorphism $\theta:\pi_1^+(N_{g,n-1},\ast)\to(\Z/d\Z)^g$ as $\theta(x)=(m_1,m_2,\dots,m_g)$.

First, we prove the following proposition.

\begin{prop}\label{PFC}
The sequence
$$\ker\theta\to\CC(\M_d(N_{g,n}))\to\M_d(N_{g,n-1})\to1$$
is exact.
\end{prop}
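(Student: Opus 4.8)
The plan is to establish exactness of the sequence
$$\ker\theta\to\CC(\M_d(N_{g,n}))\to\M_d(N_{g,n-1})\to1$$
by analyzing the three maps involved through the forgetful, capping, and point pushing machinery. The rightmost arrow is the restriction of $\FF$ to $\CC(\M_d(N_{g,n}))$, and the leftmost arrow sends $x\in\ker\theta$ to a lift by $\CC$ of $\PP(x)$. First I would verify that these restrictions are well-defined, i.e. that $\FF$ carries $\CC(\M_d(N_{g,n}))$ into $\M_d(N_{g,n-1})$ and that $\PP(\ker\theta)$ actually lands in $\CC(\M_d(N_{g,n}))$. The first is routine: capping off the $n$-th boundary and forgetting the marked point is compatible with the homology actions, so a class acting trivially on $H_1(N_{g,n};\Z/d\Z)$ maps to one acting trivially on $H_1(N_{g,n-1};\Z/d\Z)$. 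For the second, since $\ker\theta\subset\pi_1^+(N_{g,n-1},\ast)$, each $\PP(x)$ lies in $\CC(\M(N_{g,n}))$ by the cited result of \cite{Ko}; the condition $\theta(x)=0$ is precisely what forces the induced automorphism $x_\star$ on $H_1(N_{g,n};\Z/d\Z)$ to fix every $\alpha_i$, hence a lift lands in $\M_d(N_{g,n})$.

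Next I would prove surjectivity of the rightmost map. Given $\bar f\in\M_d(N_{g,n-1})$, I would view it in $\M(N_{g,n-1},\ast)$ via some lift and use the standard Birman exact sequence
$$1\to\pi_1(N_{g,n-1},\ast)\xrightarrow{\PP}\M(N_{g,n-1},\ast)\xrightarrow{\FF}\M(N_{g,n-1})\to1.$$
Lifting $\bar f$ to $\M(N_{g,n-1},\ast)$ and then through $\CC$ is the issue: I want a preimage that lies in $\CC(\M_d(N_{g,n}))$. The key observation is that any two lifts differ by an element of the image of $\PP$, and by composing with an appropriate $\PP(x)$ for $x\in\pi_1^+$ one can arrange both that the lift comes from $\CC(\M(N_{g,n}))$ and that it acts trivially on $H_1(N_{g,n};\Z/d\Z)$. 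Concretely, a chosen lift acts on $H_1(N_{g,n};\Z/d\Z)$ possibly nontrivially only in the $\delta_n$-direction (since its image in $\M(N_{g,n-1})$ is level $d$), and that discrepancy is exactly adjustable by the $\theta$-values of elements of $\pi_1^+$, which surject onto the relevant subgroup of $(\Z/d\Z)^g$.

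Finally I would establish exactness at the middle term: an element of $\CC(\M_d(N_{g,n}))$ maps to $1$ in $\M_d(N_{g,n-1})$ exactly when it lies in $\ker\FF$, which by the Birman sequence is the image of $\PP$; intersecting with $\CC(\M_d(N_{g,n}))$ then forces the pushing class to lie in $\pi_1^+$ and to have $\theta=0$, i.e. in $\ker\theta$, and conversely these are all in the image of the left map. The main obstacle I expect is the bookkeeping in the surjectivity step: one must show that the $\delta_n$-directional discrepancy of an arbitrary lift is realizable as $\theta(x)$ for some $x\in\ker(\FF\text{-level condition})\cap\pi_1^+$, which amounts to checking that $\theta$ restricted to the appropriate subgroup hits the needed values; this is the point where the precise definition of $\theta$ via $x_\star(\alpha_i)=\alpha_i+m_i\delta_n$ and the generating set $x_ix_g,\,x_i^2,\,y_k,\,z_k$ of $\pi_1^+$ must be used carefully. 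Note that the sequence is not claimed to be exact on the left (no injectivity of $\ker\theta\to\CC(\M_d(N_{g,n}))$ is asserted), so I would not attempt to prove that.
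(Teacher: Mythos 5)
Your proposal is correct and follows essentially the same route as the paper: surjectivity is obtained by lifting $f\in\M_d(N_{g,n-1})$ arbitrarily and correcting the $\delta_n$-directional discrepancy by point-pushes along elements of $\pi_1^+$ (the paper uses lifts of $\PP(x_ix_g)$ and the relation $\sum_i n_i=0$ coming from preservation of $\alpha_{1,2,\dots,g}$ to see the discrepancy lies in the image of $\theta$), and exactness in the middle follows from the Birman sequence exactly as you describe.
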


\begin{proof}
First, we show $\FF(\CC(\M_d(N_{g,n})))=\M_d(N_{g,n-1})$.
For simplicity, we regard any oriented simple closed curve of $N_{g,n}$ as an element of $H_1(N_{g,n};\Z/d\Z)$.
For any $f\in\M(N_{g,n})$, let $f_\star$ be the automorphism of $H_1(N_{g,n};\Z/d\Z)$ induced from $f$.
It follows that $f_\star(a_{1,2,\dots,g})=\alpha_{1,2,\dots,g}$ and $f_\star(\delta_k)=\delta_k$.
In fact, for any generator $f$ of $\M(N_{g,n})$ defined in \cite{KO2}, we can check that $f_\star(a_{1,2,\dots,g})=\alpha_{1,2,\dots,g}$.
It is clear that $\FF(\CC(\M_d(N_{g,n})))\subset\M_d(N_{g,n-1})$.
For any $f\in\M_d(N_{g,n-1})$, there is $h\in\M(N_{g,n})$ such that $\FF(\CC(h))=f$.
Suppose that $h_\star(\alpha_i)=\alpha_i+n_i\delta_n$ for $1\le{i}\le{g}$, where $n_i\in\Z/d\Z$.
Let $\tau_i$ be any lift by $\CC$ of $\PP(x_ix_g)$ for $1\le{i}\le{g-1}$ and let $\tau=\tau_{g-1}^{n_{g-1}}\cdots\tau_2^{n_2}\tau_1^{n_1}$.
Note that $\FF(\CC(\tau_i))=\FF(\PP(x_ix_g))=1$, and hence $\FF(\CC(\tau))=1$.
We see $(\tau_i)_\star(\alpha_i)=\alpha_i-\delta_n$, $(\tau_i)_\star(\alpha_g)=\alpha_g+\delta_n$ and $(\tau_i)_\star(\alpha_j)=\alpha_j$ for $1\le{j}\le{g-1}$ with $j\neq{}i$.
Hence we calculate
\begin{eqnarray*}
(\tau{}h)_\star(\alpha_i)
&=&\tau_\star(h_\star(\alpha_i))\\
&=&\tau_\star(\alpha_i+n_i\delta_n)\\
&=&\tau_\star(\alpha_i)+n_i\tau_\star(\delta_n)\\
&=&(\tau_i^{n_i})_\star(\alpha_i)+n_i\delta_n\\
&=&(\alpha_i-n_i\delta_n)+n_i\delta_n\\
&=&\alpha_i
\end{eqnarray*}
for $1\le{i}\le{g-1}$, and
\begin{eqnarray*}
(\tau{}h)_\star(\alpha_g)
&=&\tau_\star(h_\star(\alpha_g))\\
&=&\tau_\star(\alpha_g+n_g\delta_n)\\
&=&\tau_\star(\alpha_g)+n_g\tau_\star(\delta_n)\\
&=&\left(\alpha_g+\sum_{i=1}^{g-1}n_i\delta_n\right)+n_g\delta_n\\
&=&\alpha_g+\sum_{i=1}^gn_i\delta_n.
\end{eqnarray*}
In addition, since $(\tau{}h)_\star(\alpha_{1,2,\dots,g})=\alpha_{1,2,\dots,g}$, we calculate
\begin{eqnarray*}
(\tau{}h)_\star(\alpha_{1,2,\dots,g})
&=&\sum_{i=1}^g(\tau{}h)_\star(\alpha_i)\\
&=&\sum_{i=1}^{g-1}\alpha_i+\left(\alpha_g+\sum_{i=1}^gn_i\delta_n\right)\\
&=&\alpha_{1,2,\dots,g}+\sum_{i=1}^gn_i\delta_n\\
&=&\alpha_{1,2,\dots,g},
\end{eqnarray*}
and hence $\displaystyle\sum_{i=1}^gn_i=0$.
Therefore $(\tau{}h)_\star$ is identity, that is, $\tau{}h$ is in $\M_d(N_{g,n})$.
Since $\FF(\CC(\tau{}h))=\FF(\CC(\tau))\FF(\CC(h))=f$, we conclude that $\FF(\CC(\M_d(N_{g,n})))\supset\M_d(N_{g,n-1})$, and so the claim is obtained.

Next, we show $\PP(\ker\theta)=\ker\FF\cap\CC(\M_d(N_{g,n}))$.
We note that the sequence
$$\pi_1(N_{g,n-1},\ast)\overset{\PP}{\to}\M(N_{g,n-1},\ast)\overset{\FF}{\to}\M(N_{g,n-1})\to1$$
is exact (see \cite{Ko}).
For any $x\in\ker\theta$, since $x_\star=1$, we have that $\PP(x)$ is in $\CC(\M_d(N_{g,n}))$. 
In addition, since $\PP(x)$ is in $\ker\FF$, it follows that $\PP(x)$ is in $\ker\FF\cap\CC(\M_d(N_{g,n}))$, and so $\PP(\ker\theta)\subset\ker\FF\cap\CC(\M_d(N_{g,n}))$.
For any $f\in\ker\FF\cap\CC(\M_d(N_{g,n}))$, since $f$ is in $\ker\FF$, there is $x\in\pi_1(N_{g,n-1},\ast)$ such that $\PP(x)=f$.
In addition, since $f$ is in $\CC(\M_d(N_{g,n}))$, we have $x_\star=1$, and so $x$ is in $\ker\theta$.
Hence it follows that $f$ is in $\PP(\ker\theta)$, and so $\PP(\ker\theta)\supset\ker\FF\cap\CC(\M_d(N_{g,n}))$.
Therefore the claim is obtained.
\end{proof}

Next, we prove the following proposition.

\begin{prop}\label{ker-theta}
In $\pi_1^+(N_{g,n-1},\ast)$, $\ker\theta$ is normally generated by $x_i^2$, $y_k$, $z_k$, $(x_ix_jx_g)^2$ and $(x_ix_g)^d$ for $1\le{i}\le{g}$, $i<j<g$ and $1\le{k}\le{n-1}$.
\end{prop}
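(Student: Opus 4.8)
The plan is to pin down $\ker\theta$ by an index count. Write $K$ for the normal closure in $\pi_1^+(N_{g,n-1},\ast)$ of the proposed list $\{x_i^2,\,y_k,\,z_k,\,(x_ix_jx_g)^2,\,(x_ix_g)^d\}$. I would first check the easy inclusion $K\subseteq\ker\theta$, then show that the quotient $\pi_1^+/K$ is abelian of order at most $d^{g-1}$, and finally compare indices with $\mathrm{Im}\,\theta$. Since $\theta$ takes values in the abelian group $(\Z/d\Z)^g$ it factors through the abelianization, so $\ker\theta$ automatically contains $[\pi_1^+,\pi_1^+]$; the real content is that the listed elements already account for the commutator subgroup together with the relevant $d$-torsion.

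The first step is to evaluate $\theta$ on the generating set $\{x_ix_g,\,x_i^2,\,y_k,\,z_k\}$. Writing $x_\star(\alpha_i)=\alpha_i+m_i\delta_n$ and $\theta(x)=(m_1,\dots,m_g)$, the computation already carried out for the lifts $\tau_i$ of $\PP(x_ix_g)$ in the proof of Proposition~\ref{PFC} gives $\theta(x_ix_g)=-e_i+e_g$ for $1\le i\le g-1$, where $e_1,\dots,e_g$ is the standard basis of $(\Z/d\Z)^g$. On the other hand $\theta(x_i^2)=\theta(y_k)=\theta(z_k)=0$: the loops $y_k$ and $z_k$ can be made disjoint from every $\alpha_i$, while the two passages of $x_i^2$ through the $i$-th crosscap contribute to $m_i$ with opposite signs, because of the orientation reversal, and so cancel. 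From these values everything else is forced by the homomorphism property. Indeed $\theta((x_ix_g)^d)=d(-e_i+e_g)=0$, and, using the decomposition $(x_ix_jx_g)^2=(x_ix_g)(x_g^{-1}x_jx_g x_i)(x_jx_g)$ into elements of $\pi_1^+$ together with $\theta(x_gx_i)=\theta((x_ix_g)^{-1})=e_i-e_g$ (itself a consequence of $\theta(x_i^2)=\theta(x_g^2)=0$), one computes
\[
\theta\bigl((x_ix_jx_g)^2\bigr)=(-e_i+e_g)+(e_i+e_j-2e_g)+(-e_j+e_g)=0 .
\]
Hence $K\subseteq\ker\theta$. Moreover the elements $-e_i+e_g$ generate the ``sum-zero'' subgroup $S=\{(m_1,\dots,m_g):\sum_i m_i=0\}\cong(\Z/d\Z)^{g-1}$, so $|\mathrm{Im}\,\theta|\ge d^{g-1}$.

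For the reverse inclusion I would analyze $\pi_1^+/K$ directly. The images of $x_i^2$, $y_k$, $z_k$ are trivial, so $\pi_1^+/K$ is generated by the classes $\bar u_i$ of $x_ix_g$ for $1\le i\le g-1$ (note $x_g^2$ is among the $x_i^2$, so $\bar u_g=1$). The key identity is that the relations $x_i^2=x_j^2=x_g^2=1$ and $(x_ix_jx_g)^2=1$ force $\bar u_i$ and $\bar u_j$ to commute: in any group in which these hold one has $x_ix_jx_g=x_gx_jx_i$, hence $x_gx_j=x_ix_jx_gx_i$, and therefore $x_ix_gx_jx_g=x_i(x_gx_j)x_g=x_i(x_ix_jx_gx_i)x_g=x_jx_gx_ix_g$, i.e.\ $u_iu_j=u_ju_i$. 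Thus $\pi_1^+/K$ is abelian, generated by the $g-1$ elements $\bar u_i$, each of order dividing $d$ by the relation $(x_ix_g)^d=1$; hence it is a quotient of $(\Z/d\Z)^{g-1}$ and $[\pi_1^+:K]\le d^{g-1}$. Combining this with $K\subseteq\ker\theta$ and $|\mathrm{Im}\,\theta|\ge d^{g-1}$ yields $d^{g-1}\le[\pi_1^+:\ker\theta]=|\mathrm{Im}\,\theta|\le[\pi_1^+:K]\le d^{g-1}$, so every inequality is an equality and $K=\ker\theta$, which is the assertion.

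The main obstacle is the first, geometric step: computing $\theta$ on the generators with the correct signs. The orientation reversal incurred when a loop re-enters a crosscap is precisely what makes $\theta(x_i^2)=0$ and $\theta(x_gx_i)=-\theta(x_ix_g)$, and getting these signs right is where care is needed; once they are settled, the remaining values and the vanishing of $\theta$ on $(x_ix_jx_g)^2$ follow algebraically, and the commutator-killing identity is short. A minor point is the case distinction $n=1$ versus $n\ge 2$: when $n=1$ the group $\pi_1(N_{g,0},\ast)$ carries the surface relation $x_1^2\cdots x_g^2=1$, but this word already lies in $K$ (being a product of the $x_i^2$), so it imposes nothing beyond what is used above and the argument is uniform in $n$.
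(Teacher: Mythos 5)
Your proposal is correct in outline but takes a genuinely different route from the paper. The paper does not argue by index counting at all: it introduces the integral homomorphism $\bar{\theta}:\pi_1^+(N_{g,n-1},\ast)\to\Z^g$ with $\theta=\pi\circ\bar{\theta}$, quotes from \cite{Ko} that $\mathrm{Im}\,\bar{\theta}\cong\Z^{g-1}$ is generated by the $\bar{\theta}(x_ix_g)$ and that $\ker\bar{\theta}$ is normally generated by $x_i^2$, $y_k$, $z_k$, $(x_ix_jx_g)^2$, and then observes that $\ker\theta/\ker\bar{\theta}\cong\bar{\theta}(\ker\theta)=\ker\pi\cap\mathrm{Im}\,\bar{\theta}\cong(d\Z)^{g-1}$, generated by the $\bar{\theta}((x_ix_g)^d)$; the normal generating set for $\ker\theta$ is then the one for $\ker\bar{\theta}$ augmented by the $(x_ix_g)^d$. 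Your argument replaces the appeal to the known normal generating set of $\ker\bar{\theta}$ by a self-contained abelianization-plus-index count; this buys independence from that part of \cite{Ko} (you still need the values $\theta(x_ix_g)=-e_i+e_g$, which come from the $\tau_i$ computation in Proposition~\ref{PFC}), at the price of having to verify by hand that the listed relators force $\pi_1^+/K$ to be abelian.

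That verification is the one step that needs repair. Your commutativity argument is carried out ``in any group in which $x_i^2=x_j^2=x_g^2=1$ and $(x_ix_jx_g)^2=1$ hold,'' i.e.\ in the quotient of $\pi_1(N_{g,n-1},\ast)$ by the normal closure of the relators in the \emph{whole} fundamental group; indeed the intermediate words in your chain of identities (such as $x_gx_j$) are odd words, not elements of $\pi_1^+$. But $K$ is by definition the normal closure in the index-two subgroup $\pi_1^+$, which is a priori smaller, and since you need an \emph{upper} bound on $[\pi_1^+:K]$ the inequality goes the wrong way: showing $u_iu_j=u_ju_i$ modulo the larger normal closure does not show $[u_i,u_j]\in K$. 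The fix is short. Factor $(x_ix_jx_g)^2=(x_ix_j)(x_gx_i)(x_jx_g)$ into elements of $\pi_1^+$ and reduce each factor modulo $K$: from $x_gx_i=x_g^2\cdot(x_ix_g)^{-1}\cdot x_i^2$ and $x_ix_j=(x_ix_g)\cdot x_g^{-2}\cdot(x_gx_j)$ one gets $x_gx_i\equiv\bar u_i^{-1}$ and $x_ix_j\equiv\bar u_i\bar u_j^{-1}$ in $\pi_1^+/K$, so the class of $(x_ix_jx_g)^2$ there is $\bar u_i\bar u_j^{-1}\bar u_i^{-1}\bar u_j$; since $(x_ix_jx_g)^2\in K$ this commutator is trivial. (Alternatively, one can first check that $x_g$-conjugates of all listed relators already lie in $K$, so that $K$ is normal in all of $\pi_1$ and your computation applies verbatim.) With that replacement the rest of your argument --- orders dividing $d$, the bound $[\pi_1^+:K]\le d^{g-1}$, and the comparison with $|\mathrm{Im}\,\theta|\ge d^{g-1}$ --- goes through.
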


\begin{proof}
For $x\in\pi_1^+(N_{g,n-1},\ast)$, let $x_\ast$ be the automorphism of $H_1(N_{g,n};\Z)$ induced from any lift by $\CC$ of $\PP(x)$.
For $1\le{i}\le{g}$, there is $m_i\in\Z$ such that $x_\ast(\alpha_i)=\alpha_i+m_i\delta_n$.
Then we can define the homomorphism $\bar{\theta}:\pi_1^+(N_{g,n-1},\ast)\to\Z^g$ as $\bar{\theta}(x)=(m_1,m_2,\dots,m_g)$.
Note that $\theta=\pi\circ\bar{\theta}$, where $\pi:\Z^g\to(\Z/d\Z)^g$ is the natural projection.
The author \cite{Ko} showed that $\textrm{Im}\bar{\theta}$ is isomorphic to $\Z^{g-1}$ which is generated by $\bar{\theta}(x_ix_g)$ for $1\le{i}\le{g-1}$.
Hence $\textrm{Im}\theta$ is isomorphic to $(\Z/d\Z)^{g-1}$ which is generated by $\theta(x_ix_g)$ and $\bar{\theta}(\ker\theta)=\ker\pi\cap\textrm{Im}\bar{\theta}$ is isomorphic to $(d\Z)^{g-1}$ which is generated by $\bar{\theta}((x_ix_g)^d)$, for $1\le{i}\le{g-1}$.
In addition, the author \cite{Ko} also showed that in $\pi_1^+(N_{g,n-1},\ast)$, $\ker\bar{\theta}$ is normally generated by $x_i^2$, $y_k$, $z_k$ and $(x_ix_jx_g)^2$ for $1\le{i}\le{g}$, $i<j<g$ and $1\le{k}\le{n-1}$.
Therefore, since $\ker\theta/\ker\bar{\theta}$ is isomorphic to $\bar{\theta}(\ker\theta)$, we conclude that in $\pi_1^+(N_{g,n-1},\ast)$, $\ker\theta$ is normally generated by $x_i^2$, $y_k$, $z_k$, $(x_ix_jx_g)^2$ and $(x_ix_g)^d$ for $1\le{i}\le{g}$, $i<j<g$ and $1\le{k}\le{n-1}$.
Thus the claim is obtained.
\end{proof}

We now prove Theorem~\ref{main-2} with $n\ge1$.

Let $\eta_{i,j;k}$ and $\alpha_{i;k}$ be simple closed curves as shown in Figure~\ref{ez}.
By Propositions~\ref{PFC} and \ref{ker-theta}, in $\M(N_{g,n-1},\ast)$, $\CC(\M_d(N_{g,n}))$ is normally generated by any lifts by $\FF$ of any normal generator of $\M_d(N_{g,n-1})$ in $\M(N_{g,n-1})$ and $\PP(x_i^2)$, $\PP(y_k)$, $\PP(z_k)$, $\PP((x_ix_jx_g)^2)$ and $\PP((x_ix_g)^d)$ for $1\le{i}\le{g}$, $i<j<g$ and $1\le{k}\le{n-1}$.
In addition, since $\ker\CC$ is generated by $t_{\delta_n}$, in $\M(N_{g,n})$, $\M_d(N_{g,n})$ is normally generated by any lifts by $\CC$ of any normal generator of $\CC(\M_d(N_{g,n}))$ in $\M(N_{g,n-1},\ast)$ and $t_{\delta_n}$.
We see that $\CC(t_{\epsilon_{i,n}})=\PP(x_i^2)$, $\CC(t_{\zeta_{k,n}}t_{\delta_k}^{-1})=\PP(y_k)$, $\CC(t_{\bar{\zeta}_{k,n}}t_{\delta_k}^{-1})=\PP(z_k)$, $\CC(t_{\eta_{i,j;n}})=\PP((x_ix_jx_g)^2)$ and $\CC(t_{\alpha_{i,g}}^{-d}t_{\alpha_{i;n}}^d)=\PP((x_ix_g)^d)$.
Note that $t_{\epsilon_{i,n}}$ and $t_{\eta_{i,j;n}}$ are conjugate to $t_{\epsilon_{g,n}}$, and that $t_{\alpha_{i,g}}^d$ and $t_{\alpha_{i;n}}^d$ are conjugate to $t_{\alpha_{1,2}}^d$.
Therefore, by induction on $n$, we have that in $\M(N_{g,n})$, $\M_d(N_{g,n})$ is normally generated by
\begin{itemize}
\item	$Y_{\alpha_1,\alpha_{1,2}}$ only if $d=2$,
\item	$t_{\alpha_{1,2}}^d$,
\item	$t_{\alpha_{1,2,3,4}}^d$ only if $d$ is odd and $g=4$,
\item	$\left(t_{\alpha_{1,2}}t_{\alpha^\prime_{1,2}}^{-1}\right)^\frac{d}{2}$ only if $d$ is even,
\item	$t_{\alpha_{1,2,3,4}}t_{\alpha^\prime_{1,2,3,4}}$, $t_{\beta_{1,2}}$ only if $d\ge3$,
\item	$t_\gamma$ only if $g=4$ and $d\ge3$,
\item	$t_{\delta_k}$, $t_{\epsilon_{g,k}}$ for $1\le{k}\le{n}$,
\item	$t_{\zeta_{k,l}}$ and $t_{\bar{\zeta}_{k,l}}$ for $1\le{k<l}\le{n}$, only if $n\ge2$.
\end{itemize}
It is known that $t_{\alpha_{1,2}}^2$ is a product of crosscap slides (see \cite{Sz1}).
In addition, the author showed that $t_{\delta_n}$, $t_{\epsilon_{g,n}}$, $t_{\zeta_{k,n}}$ and $t_{\bar{\zeta}_{k,n}}$ are products of conjugates of other normal generators (see \cite{Ko}).

\begin{figure}[htbp]
\includegraphics{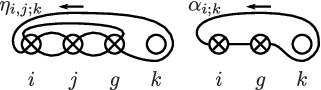}
\caption{Simple closed curves $\eta_{i,j;k}$ and $\alpha_{i;k}$.}\label{ez}
\end{figure}

Thus we complete the proof.

\section{A finite generating set for $\M_4(N_{g,0})$}\label{4}

Denote $Y_{i,j}=Y_{\alpha_i,\alpha_{i,j}}$, $Y_{j,i}=Y_{\alpha_j,\alpha_{i,j}}$, $A_{i,j}=t_{\alpha_{i,j}}^4$, $B_{i,j}=t_{\beta_{i,j}}$, $C_{i,j;k}=t_{\alpha_{i,j}}^{-2}t_{Y_{k,i}(\alpha_{i,j})}^2$ or $t_{\alpha_{i,j}}^2t_{Y_{k,i}^{-1}(\alpha_{i,j})}^{-2}$ if $i<k<j$ or the others respectively, and $D_{i,j,k,l}=t_{\alpha_{i,j,k,l}}t_{Y_{j,i}Y_{k,l}^{-1}(\alpha_{i,j,k,l})}$ (see Figure~\ref{ABCD}).
Note that $A_{i,j}=(Y_{j,i}^{-1}Y_{i,j})^2=(Y_{j,i}Y_{i,j}^{-1})^2$, $B_{i,j}=Y_{i,j}^2=Y_{j,i}^2$ and $C_{i,j;k}=(Y_{k,j}Y_{k,i})^2$ or $(Y_{k,i}Y_{k,j})^2$ if $i<k<j$ or the others respectively (see~\cite{Sz1}).
Let $\Y$, $\A$, $\B$, $\C$ and $\D$ be sets as
\begin{eqnarray*}
\Y&=&\{Y_{i,j}\mid1\le{i}\le{g-1},1\le{j}\le{g}~\textrm{with}~i\neq{}j\},\\
\A&=&\{A_{i,j}\mid1\le{i<j}\le{}g\},\\
\B&=&\{B_{i,j}\mid1\le{i<j}\le{}g\},\\
\C&=&\{C_{i,j;k}\mid1\le{i<j}\le{}g,1\le{k}\le{g}~\textrm{with}~k\neq{}i,j\},\\
\D&=&\{D_{1,j,k,l}\mid1<j<k<l\le{}g\}.
\end{eqnarray*}
We take the total order of $\Y$ such that $Y_{i,j}<Y_{k,l}$ if $i<k$, or $i=k$ and $j<l$.
Let $\YB$ be
$$\YB=\{y_1y_2\cdots{}y_k\mid0\le{k}\le|\Y|,y_i\in\Y,y_1<y_2<\cdots<y_k\}.$$

\begin{figure}[htbp]
\includegraphics{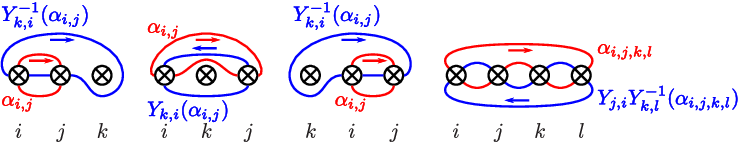}
\caption{Simple closed curves $Y_{k,i}^{\pm1}(\alpha_{i,j})$ and $Y_{j,i}Y_{k,l}^{-1}(\alpha_{i,j,k,l})$.}\label{ABCD}
\end{figure}

In this section, we prove the following theorem.

\begin{thm}\label{main-3}
For $g\geq4$, $\M_4(N_{g,0})$ is generated by $yxy^{-1}$ for $x\in\A\cup\B\cup\C\cup\D$ and $y\in\YB$.
\end{thm}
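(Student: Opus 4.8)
The backbone of the argument is the short exact sequence supplied by Theorem~\ref{main-1} at the even level $d=4$,
$$1\to\I(N_{g,0})\to\M_4(N_{g,0})\xrightarrow{\ \Phi\ }\G_4(g-1)\to1.$$
Write $H$ for the subgroup generated by the proposed set $\{yxy^{-1}\mid x\in\A\cup\B\cup\C\cup\D,\ y\in\YB\}$. First I would record that every $x\in\A\cup\B\cup\C\cup\D$ already lies in $\M_4(N_{g,0})$: $A_{i,j}=t_{\alpha_{i,j}}^4$ is a fourth power, $B_{i,j}$ and $D_{1,j,k,l}$ lie in $\I(N_{g,0})$, and $C_{i,j;k}$ is a product of opposite squared twists whose action on $H_1(N_{g,0};\Z/4\Z)$ is trivial. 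Since $\M_4(N_{g,0})$ is normal in $\M(N_{g,0})$, this gives $H\subseteq\M_4(N_{g,0})$, and only the reverse inclusion remains. Because $\I(N_{g,0})=\ker(\Phi|_{\M_4(N_{g,0})})$, it suffices to prove the two statements (A) $\Phi(H)=\G_4(g-1)$ and (B) $\I(N_{g,0})\subseteq H$: indeed, given $m\in\M_4(N_{g,0})$, (A) provides $h\in H$ with $\Phi(h)=\Phi(m)$, whence $h^{-1}m\in\I(N_{g,0})\subseteq H$ by (B), so $m\in H$.

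For (A) I would invoke the generating set of $\G_4(g-1)$ from \cite{IK2}, namely the matrices $e_{ij}^4$ and $e_{k1}e_{1k}^4e_{k1}^{-1}$, valid for $g\ge4$. By Example~\ref{dehn-crosscap}~(1) the image $\Phi(A_{1,k})=\Phi(t_{\alpha_{1,k}}^4)$ equals precisely the conjugated generator $e_{k1}e_{1k}^4e_{k1}^{-1}$, while the $C$-elements, being $(Y_{k,j}Y_{k,i})^2$ up to the recorded relation, map under $\Phi$ to elementary matrices of the form $e_{ij}^{\pm4}$. As $d=4\ge3$ we have $\G_4(g-1)=\GH_4(g-1)\trianglelefteq\GL(g-1;\Z)$, and $\Phi(\YB)\subseteq\GL(g-1;\Z)$, so each conjugate $\Phi(y)\Phi(x)\Phi(y)^{-1}$ again lies in $\G_4(g-1)$; conjugating the basic matrices by the $\Phi(Y_{a,b})$ moves the active indices over all admissible pairs. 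The step then reduces to checking that this family contains the \cite{IK2} generators, which is routine linear algebra with the matrices of Example~\ref{dehn-crosscap}.

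For (B) I would start from \cite{HK}, by which $\I(N_{g,0})$ is normally generated in $\M(N_{g,0})$ by $D_{1,2,3,4}=t_{\alpha_{1,2,3,4}}t_{\alpha^\prime_{1,2,3,4}}$, $B_{1,2}=t_{\beta_{1,2}}$, and (only for $g=4$) $t_\gamma$, all of which belong to $H$ as the conjugates with $y=1$. The task is to promote this normal generation inside the full group $\M(N_{g,0})$ to generation by $\YB$-conjugates alone. The device is the relations $A_{i,j}=(Y_{j,i}^{-1}Y_{i,j})^2$, $B_{i,j}=Y_{i,j}^2$ and $C_{i,j;k}=(Y_{k,j}Y_{k,i})^2$: they show that squaring a crosscap slide, or transposing two crosscap slides, costs only an element of $\langle\A\cup\B\cup\C\rangle\subseteq H$. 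Consequently any product of crosscap slides normalizes to a squarefree ordered word, i.e.\ an element of $\YB$, times an element of $H$, so conjugation by $\langle\Y\rangle$ may be replaced by conjugation by $\YB$ modulo $H$; combined with the fact that the remaining (Dehn-twist) generators of $\M(N_{g,0})$ normalize $\I(N_{g,0})$ and act through elements already accounted for, this yields $\I(N_{g,0})\subseteq H$.

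I expect step (B) to be the main obstacle. Converting the \cite{HK} normal generation into honest generation requires expressing, for each generator $w$ of $\M(N_{g,0})$ and each Torelli normal generator $x$, the conjugate $wxw^{-1}$ as an explicit product of $\YB$-conjugates of elements of $\A\cup\B\cup\C\cup\D$, and the bookkeeping of the reordering relations for the crosscap slides is where the genuine combinatorial work lies. Step (A), by contrast, is essentially a finite matrix verification once the images computed in Example~\ref{dehn-crosscap} and the \cite{IK2} generators are in hand.
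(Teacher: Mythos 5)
Your reduction to (A) $\Phi(H)=\G_4(g-1)$ and (B) $\I(N_{g,0})\subseteq H$ is logically sound, the verification that $H\subseteq\M_4(N_{g,0})$ is correct, and step (A) is essentially the computation already carried out in the proof of Theorem~\ref{main-1}~(1) (with $\Phi(A_{1,k})=e_{k1}e_{1k}^4e_{k1}^{-1}$ and the $\C$-elements hitting $e_{ij}^{\pm4}$), so no objection there. The genuine gap is in (B). The result of \cite{HK} gives \emph{normal} generation of $\I(N_{g,0})$ in the full group $\M(N_{g,0})$, so you must show that $wxw^{-1}\in H$ for \emph{every} $w\in\M(N_{g,0})$ and every Torelli normal generator $x$; but $H$ only contains conjugates by the finite set $\YB$ of ordered squarefree words in crosscap slides. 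Your sketch handles reordering and squaring of crosscap slides, but $\M(N_{g,0})$ is generated by crosscap slides together with Dehn twists such as $t_{\alpha_{i,i+1}}$ and $t_{\alpha_{1,2,3,4}}$, and for these your only remark is that they ``normalize $\I(N_{g,0})$ and act through elements already accounted for'' --- normalizing $\I(N_{g,0})$ is automatic (it is normal in $\M(N_{g,0})$) and says nothing about whether, say, $t_{\alpha_{1,2}}D_{1,2,3,4}t_{\alpha_{1,2}}^{-1}$ lies in $H$. In addition, for $g=4$ the \cite{HK} normal generating set requires $t_\gamma$, which is not in $\A\cup\B\cup\C\cup\D$, and you do not address how to produce it inside $H$.

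The paper avoids both difficulties by never passing through the Torelli group. It first shows (Lemma~\ref{M_2-gen}) that $\M_2(N_{g,0})$ is honestly generated by $\Y\cup\D$ (Hirose--Sato's generating set plus a $3$-chain relation), then uses $[\GH_2(g-1),\GH_2(g-1)]=\G_4(g-1)$ from \cite{IK2} to identify $\M_2(N_{g,0})/\M_4(N_{g,0})\cong(\Z/2\Z)^{|\Y|}$ with the classes of the crosscap slides as a basis; this makes $\YB$ a Schreier transversal, and the Reidemeister--Schreier rewriting of the generating set $\Y\cup\D$ produces exactly $\YB$-conjugates of squares $x^2\in\B$, commutators $[x_1,x_2]$ of crosscap slides, and elements of $\D$ (Lemma~\ref{M_4-gen}). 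The remaining work --- Lemma~\ref{[x_1,x_2]} and the closing induction --- is precisely the combinatorial bookkeeping you anticipated, but it only ever involves crosscap slides, which is why it closes up. If you want to keep your route via $1\to\I(N_{g,0})\to\M_4(N_{g,0})\to\G_4(g-1)\to1$, you would need a separate argument that $H$ is invariant under conjugation by the twist generators of $\M(N_{g,0})$, and that is not easier than the paper's computation; as it stands, step (B) is not established.
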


First, we show the following lemma.

\begin{lem}\label{M_2-gen}
For $g\geq4$, $\M_2(N_{g,0})$ is minimally generated by $\Y\cup\D$.
\end{lem}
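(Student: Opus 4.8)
The plan is to exploit the short exact sequence
$$1\to\I(N_{g,0})\to\M_2(N_{g,0})\overset{\Phi}{\to}\GH_2(g-1)\to1,$$
which holds by the facts $\Phi(\M_2(N_{g,0}))=\GH_2(g-1)$ and $\ker\Phi|_{\M_2(N_{g,0})}=\I(N_{g,0})$ recalled just before Theorem~\ref{main-1}. Reducing the homology actions of Example~\ref{dehn-crosscap} modulo $2$ shows every element of $\Y$ acts trivially on $H_1(N_{g,0};\Z/2\Z)$, while every element of $\D$ lies in $\I(N_{g,0})$; hence $\Y\cup\D\subset\M_2(N_{g,0})$. Once $\langle\Phi(\Y)\rangle=\GH_2(g-1)=\Phi(\M_2(N_{g,0}))$ is known, it forces $\langle\Y\cup\D\rangle\cdot\I(N_{g,0})=\M_2(N_{g,0})$, so generation reduces to two claims: (a) $\Phi(\Y)$ generates $\GH_2(g-1)$, and (b) $\I(N_{g,0})\subset\langle\Y\cup\D\rangle$.

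For (a) I would argue entirely with the matrices of Example~\ref{dehn-crosscap}. Writing $e_{ij}$ for the elementary matrix, a direct computation gives $\Phi(Y_{i,g})\Phi(Y_{i,j})=e_{ij}^{-2}$ for $1\le i,j\le g-1$ with $i\ne j$, because $\Phi(Y_{i,g})$ negates the $i$-th coordinate while $\Phi(Y_{i,j})$ negates it and adds twice the $j$-th. Thus all $e_{ij}^{\pm2}$ lie in $\langle\Phi(\Y)\rangle$. The remaining congruence-subgroup generators $e_{k1}e_{1k}^2e_{k1}^{-1}$ of \cite{IK2} are obtained from the relation $t_{\alpha_{1,k}}^2=Y_{k,1}^{-1}Y_{1,k}$ of \cite{Sz1}, since $\Phi(t_{\alpha_{1,k}}^2)=e_{k1}e_{1k}^2e_{k1}^{-1}$. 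Hence $\langle\Phi(\Y)\rangle\supset\G_2(g-1)$ by \cite{IK2}, and as $\Phi(Y_{i,g})$ has determinant $-1$ it represents the nontrivial coset of $\G_2(g-1)$ in $\GH_2(g-1)$; therefore $\langle\Phi(\Y)\rangle=\GH_2(g-1)$.

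For (b) I would first record, from the relations stated just before the lemma, that $A_{i,j}=(Y_{j,i}^{-1}Y_{i,j})^2$, $B_{i,j}=Y_{i,j}^2$ and $C_{i,j;k}=(Y_{k,j}Y_{k,i})^2$ all lie in $\langle\Y\rangle$, so $\A\cup\B\cup\C\subset\langle\Y\rangle$; in particular $t_{\beta_{1,2}}=B_{1,2}\in\langle\Y\rangle$ and $D_{1,2,3,4}=t_{\alpha_{1,2,3,4}}t_{\alpha'_{1,2,3,4}}\in\D$. By \cite{HK}, $\I(N_{g,0})$ is normally generated in $\M(N_{g,0})$ by $D_{1,2,3,4}$, $t_{\beta_{1,2}}$ and, if $g=4$, $t_\gamma$. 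To pass from this to membership in $H=\langle\Y\cup\D\rangle$ I would show that each conjugate needed to generate $\I(N_{g,0})$ is already in $H$: the $\beta$-type conjugates are the twists $B_{i,j}\in\langle\Y\rangle$; the conjugates of $D_{1,2,3,4}$ are the bounding-pair maps $D_{i,j,k,l}$, which I would move into $\D$ by conjugating with crosscap-slide words relabelling the crosscaps (keeping the conjugators in $\langle\Y\rangle\subset H$); and for $g=4$ the twist $t_\gamma$ would be written directly as a word in $\Y\cup\D$.

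I expect the main obstacle to be exactly this last step: upgrading the \emph{normal} generation of $\I(N_{g,0})$ in the full group $\M(N_{g,0})$ to an honest generating set contained in $H$. The crux is to verify that $\langle\Y\rangle$ acts transitively enough on the curve configurations defining the $D$-, $B$- and $\gamma$-type elements so that every required conjugator can be chosen inside $\langle\Y\rangle$; equivalently, to produce a genuine finite generating set of $\I(N_{g,0})$ all of whose members are visibly words in $\Y\cup\D$. Finally, for minimality I would pass to a computable abelian quotient of $\M_2(N_{g,0})$ (built from $\GH_2(g-1)^{\mathrm{ab}}$ and $\I(N_{g,0})^{\mathrm{ab}}$, or from $H_1(\M_2(N_{g,0});\Z)\otimes\mathbb{F}_p$ for a suitable prime $p$) and check that the images of the $(g-1)^2$ elements of $\Y$ and the $\binom{g-1}{3}$ elements of $\D$ are independent there — the crosscap slides contributing order-two classes and the maps $D_{1,j,k,l}$ contributing independent infinite-order classes — so that the minimal number of generators equals $|\Y\cup\D|$.
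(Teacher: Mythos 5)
Your outline has a genuine gap at exactly the point you yourself flag: step (b), the inclusion $\I(N_{g,0})\subset\langle\Y\cup\D\rangle$. Knowing from \cite{HK} that $\I(N_{g,0})$ is \emph{normally} generated in $\M(N_{g,0})$ by $D_{1,2,3,4}$, $t_{\beta_{1,2}}$ and (for $g=4$) $t_\gamma$ only says that $\I(N_{g,0})$ is generated by the $\M(N_{g,0})$-conjugates of these elements, with conjugators ranging over the whole mapping class group; nothing forces those conjugates into $H=\langle\Y\cup\D\rangle$, and your proposed remedy (that $\langle\Y\rangle$ acts ``transitively enough'' on the relevant curve configurations, or that $t_\gamma$ can be written directly as a word in $\Y\cup\D$) is precisely the hard, unproved content. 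The minimality step has the same status: the independence of the images of $\Y\cup\D$ in a suitable abelian quotient is asserted, not computed. So as written the argument does not close. (Part (a), that $\Phi(\Y)$ generates $\GH_2(g-1)$, is fine and consistent with the computations of Example~\ref{dehn-crosscap}, but it is the easy half.)

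The paper avoids both difficulties by taking as its starting point the Hirose--Sato theorem \cite{HS} that $\M_2(N_{g,0})$ is already \emph{minimally} generated by $\Y\cup\{t_{\alpha_{1,j,k,l}}^2\mid 1<j<k<l\le g\}$. Since $|\D|=\binom{g-1}{3}$ equals the number of those squared twists, the lemma reduces to one computation: each $t_{\alpha_{1,j,k,l}}^2$ lies in $\langle\Y\cup\D\rangle$. This is done via the $3$-chain relation $t_{\alpha_{1,j,k,l}}t_{Y_{j,1}Y_{k,l}^{-1}(\alpha_{1,j,k,l})}^{-1}=(t_{\alpha_{1,j}}t_{\alpha_{j,k}}t_{\alpha_{k,l}})^4$, which braid relations convert into a product of conjugates of squares $t_{\alpha_{r,s}}^2=Y_{s,r}^{-1}Y_{r,s}$, hence into a word in $\Y$; multiplying by $D_{1,j,k,l}$ (the twists commute, being about disjoint curves) recovers $t_{\alpha_{1,j,k,l}}^2$. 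Generation and minimality are then both inherited from \cite{HS}. To salvage your route you would essentially have to reprove a generation (not just normal generation) statement for $\I(N_{g,0})$ with generators visibly inside $H$, which is substantially harder than the lemma itself; I recommend restructuring the proof around the exchange-of-generators argument instead.
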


\begin{proof}
Hirose-Sato \cite{HS} proved that $\M_2(N_{g,0})$ is minimally generated by $\Y$ and $t_{\alpha_{1,j,k,l}}^2$ for $1<j<k<l\le{g}$.
Hence it suffices to show that $t_{\alpha_{1,j,k,l}}^2$ is in the group generated by $\Y\cup\D$.

Note that $t_{\alpha_{i_1,i_2}}^2=Y_{i_2,i_1}^{-1}Y_{i_1,i_2}$ (see \cite{Sz1}).
By a $3$-chain relation and braid relations, we calculate
\begin{eqnarray*}
t_{\alpha_{1,j,k,l}}t_{Y_{j,1}Y_{k,l}^{-1}(\alpha_{1,j,k,l})}^{-1}
&=&
(t_{\alpha_{1,j}}t_{\alpha_{j,k}}t_{\alpha_{k,l}})^4\\
&=&
t_{\alpha_{1,j}}^2t_{\alpha_{j,k}}t_{\alpha_{1,j}}^2t_{\alpha_{j,k}}t_{\alpha_{k,l}}t_{\alpha_{j,k}}t_{\alpha_{1,j}}^2t_{\alpha_{j,k}}t_{\alpha_{k,l}}\\
&=&
t_{\alpha_{1,j}}^2\cdot{}t_{\alpha_{j,k}}^2\cdot{}t_{\alpha_{j,k}}^{-1}t_{\alpha_{1,j}}^2t_{\alpha_{j,k}}\cdot{}t_{\alpha_{k,l}}^2\cdot{}t_{\alpha_{k,l}}^{-1}t_{\alpha_{j,k}}^2t_{\alpha_{k,l}}\cdot{}t_{\alpha_{k,l}}^{-1}t_{\alpha_{j,k}}^{-1}t_{\alpha_{1,j}}^2t_{\alpha_{j,k}}t_{\alpha_{k,l}}\\
&=&
Y_{j,1}^{-1}Y_{1,j}\cdot{}Y_{k,j}^{-1}Y_{j,k}\cdot{}Y_{j,1}(Y_{k,1}^{-1}Y_{1,k})Y_{j,1}^{-1}\\
&&
Y_{l,k}^{-1}Y_{k,l}\cdot{}Y_{k,j}(Y_{l,j}^{-1}Y_{j,l})Y_{k,j}^{-1}\cdot{}Y_{j,1}Y_{k,l}^{-1}(Y_{l,1}^{-1}Y_{1,l})Y_{k,l}Y_{j,1}^{-1}
\end{eqnarray*}
(see Fifure~\ref{3-chain}).
Note that $Y_{g,k}$ can be described as a product of elements in $\Y$ (see \cite{Sz2}).
Hence we have that $t_{\alpha_{1,j,k,l}}^2=t_{\alpha_{1,j,k,l}}t_{Y_{j,i}Y_{k,l}^{-1}(\alpha_{i,j,k,l})}^{-1}D_{1,j,k,l}$ is in the group generated by $\Y\cup\D$.

\begin{figure}[htbp]
\includegraphics{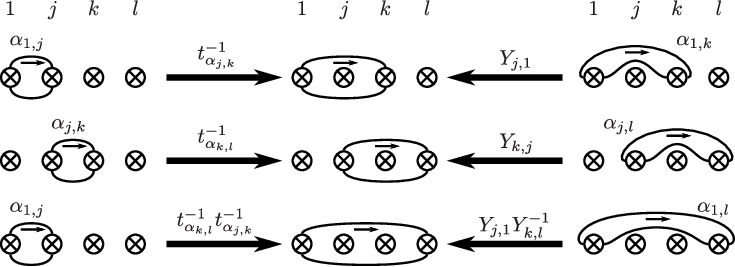}
\caption{$t_{\alpha_{j,k}}^{-1}(\alpha_{1,j})=Y_{j,1}(\alpha_{1,k})$, $t_{\alpha_{k,l}}^{-1}(\alpha_{j,k})=Y_{k,j}(\alpha_{j,l})$ and $t_{\alpha_{k,l}}^{-1}t_{\alpha_{j,k}}^{-1}(\alpha_{1,j})=Y_{j,1}Y_{k,l}^{-1}(\alpha_{1,l})$.}\label{3-chain}
\end{figure}

Thus we finish the proof.
\end{proof}

Next, we show the following lemma.

\begin{lem}\label{M_4-gen}
For $g\geq4$, $\M_4(N_{g,0})$ is generated by $y[x_1,x_2]y^{-1}$ and $yxy^{-1}$ for $x_1$, $x_2\in\Y$, $x\in\B\cup\D$ and $y=y_1y_2\cdots{}y_m\in\YB$ with $x_1<x_2$ and $y_1<y_2<\cdots<y_m<x_2$.
\end{lem}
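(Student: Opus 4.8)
The plan is to view $\M_4(N_{g,0})$ as a finite-index subgroup of $\M_2(N_{g,0})$ and to apply the Reidemeister--Schreier rewriting process, taking the generating set $\Y\cup\D$ of $\M_2(N_{g,0})$ provided by Lemma~\ref{M_2-gen} and the set $\YB$ as a Schreier transversal. The first task is therefore to understand $\M_2(N_{g,0})/\M_4(N_{g,0})$ and to check that $\YB$ really is a transversal. Since $\I(N_{g,0})\subset\M_4(N_{g,0})\subset\M_2(N_{g,0})$ and, for the even value $d=4$, Theorem~\ref{main-1} gives $\Phi(\M_2(N_{g,0}))=\GH_2(g-1)$ and $\Phi(\M_4(N_{g,0}))=\G_4(g-1)$ with common kernel $\I(N_{g,0})$, the homomorphism $\Phi$ induces an injection
$$\M_2(N_{g,0})/\M_4(N_{g,0})\;\cong\;\GH_2(g-1)/\G_4(g-1)\;\hookrightarrow\;\{M\in\GL(g-1;\Z/4\Z)\mid M\equiv I\!\!\pmod 2\}.$$
Writing an element of the target as $M=I+2N$ one has $(I+2N)(I+2N')\equiv I+2(N+N')\pmod4$, so the target is an elementary abelian $2$-group and $M\mapsto N\bmod 2$ identifies it with a subgroup of the additive group of $(g-1)\times(g-1)$ matrices over $\Z/2\Z$.

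Under this identification the class of $Y_{i,j}$ is recorded by $N_{i,j}\bmod 2$, which I would read off from Example~\ref{dehn-crosscap}: $Y_{i,g}$ contributes the diagonal matrix unit $E_{ii}$, while for $i<j<g$ the slides $Y_{i,j}$ and $Y_{j,i}$ contribute $E_{ii}+E_{ij}$ and $E_{jj}+E_{ji}$ respectively. These $(g-1)^2$ matrices are linearly independent over $\Z/2\Z$ (the diagonal units $E_{ii}$ isolate the off-diagonal units $E_{ij}$ and $E_{ji}$), and since $\D\subset\I(N_{g,0})\subset\M_4(N_{g,0})$ the classes $[Y_{i,j}]$ already generate $\M_2(N_{g,0})/\M_4(N_{g,0})$. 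Hence $\{[Y_{i,j}]\}$ is a basis and $\M_2(N_{g,0})/\M_4(N_{g,0})\cong(\Z/2\Z)^{(g-1)^2}$. Consequently the ordered products of distinct elements of $\Y$, which are exactly the elements of $\YB$, map bijectively onto the cosets; since $\YB$ is prefix-closed it is a Schreier transversal.

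With $\YB$ fixed, Reidemeister--Schreier gives that $\M_4(N_{g,0})$ is generated by the elements $y\,s\,\overline{ys}^{-1}$ for $y=y_1\cdots y_m\in\YB$ and $s\in\Y\cup\D$, where $\overline{g}$ is the representative in $\YB$ of the coset of $g$. If $s=D\in\D$ then $D\in\M_4(N_{g,0})$, so $\overline{yD}=y$ and the element is the conjugate $yDy^{-1}$. If $s=Y_{k,l}\in\Y$, then because $Y_{k,l}^2=B_{k,l}\in\M_4(N_{g,0})$ the class $[Y_{k,l}]$ is an involution, so $\overline{ys}$ is obtained from $y$ by inserting $Y_{k,l}$ in its sorted place, or by deleting it if it already appears. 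The computation is then to sort $s$ leftwards into position using $y_p s=[y_p,s]\,s\,y_p$: each time $s$ passes a letter $y_q>s$ one splits off a conjugate $(y_1\cdots y_{q-1})[y_q,s](y_1\cdots y_{q-1})^{-1}$, which, on setting $\{x_1,x_2\}=\{s,y_q\}$ with $x_1<x_2=y_q$, is exactly a conjugate of $[x_1,x_2]^{\pm1}$ by the element $y_1\cdots y_{q-1}\in\YB$ all of whose letters are $<x_2$. When $Y_{k,l}$ is not already present the sorting ends at $\overline{ys}$ and the whole element is a product of such commutator conjugates; when it is present, the two copies collide and contract through $Y_{k,l}^2=B_{k,l}$, producing additionally a conjugate $y'B_{k,l}y'^{-1}$ with $y'\in\YB$. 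Gathering the three possibilities yields precisely the generators $y[x_1,x_2]y^{-1}$ and $yxy^{-1}$ with $x\in\B\cup\D$ asserted in the lemma.

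The main obstacle I expect is the bookkeeping in this last paragraph rather than any single hard step: one must check carefully that only genuine crosscap slides (not their inverses) occur inside the commutators, that the stated inequalities $x_1<x_2$ and $y_1<\cdots<y_m<x_2$ hold for every conjugator produced by the sorting, and that in the deletion case the collision contributes only a $\B$-conjugate and no stray terms. The structural inputs that make the whole scheme work --- the identification of $\M_2(N_{g,0})/\M_4(N_{g,0})$ with $(\Z/2\Z)^{(g-1)^2}$ and the verification that $\{[Y_{i,j}]\}$ is a basis, so that $\YB$ is a legitimate Schreier transversal --- are where the results of Theorem~\ref{main-1} and Example~\ref{dehn-crosscap} are used, and once they are in place the commutator shapes follow from the sorting identity above.
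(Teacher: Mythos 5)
Your proposal is correct and follows essentially the same route as the paper: Reidemeister--Schreier applied to $\M_4(N_{g,0})\le\M_2(N_{g,0})$ with the generating set $\Y\cup\D$ from Lemma~\ref{M_2-gen}, the transversal $\YB$, and the same sorting/commutator decomposition $[x,w_1\cdots w_n]=\prod_k (w_1\cdots w_{k-1})[x,w_k](w_1\cdots w_{k-1})^{-1}$ together with the contraction $Y_{k,l}^2=B_{k,l}$ in the deletion case. The only divergence is that you identify $\M_2(N_{g,0})/\M_4(N_{g,0})\cong(\Z/2\Z)^{(g-1)^2}$ by a direct mod~$4$ matrix computation on the $\Phi$-images of the $Y_{i,j}$, whereas the paper obtains the same isomorphism by citing $[\GH_2(g-1),\GH_2(g-1)]=\G_4(g-1)$ from \cite{IK2} and the abelianization of $\GH_2(g-1)$ from its presentation in \cite{HK}; both are valid, and yours is more self-contained.
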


\begin{proof}
Let $G$ be the subgroup of $\M(N_{g,0})$ generated by the elements in Lemma~\ref{M_4-gen}.

We use the Reidemeister-Schreier method.
Imoto and the author \cite{IK2} proved that $[\GH_2(n),\GH_2(n)]=\G_4(n)$ for $n\ge3$.
By Theorem~\ref{main-1}, we have
$$\M_2(N_{g,0})/\M_4(N_{g,0})\cong\GH_2(g-1)/\G_4(g-1)=\GH_2(g-1)/[\GH_2(g-1),\GH_2(g-1)].$$
In addition, from a finite presentation for $\GH_2(g-1)$ given in \cite{HK}, we have that $\GH_2(g-1)/[\GH_2(g-1),\GH_2(g-1)]$ is isomorphic to $\left(\Z/2\Z\right)^{|\Y|}$.
We can regard $\YB=\left(\Z/2\Z\right)^{|\Y|}$ as a set.
Note that $\YB$ is a Schreier transversal for $\M_4(N_{g,0})$ in $\M_2(N_{g,0})$.
For $\varphi\in\M_2(N_{g,0})$, we denote by $\overline{\varphi}$ the element of $\YB$ corresponding to the image of $\varphi$ in $\left(\Z/2\Z\right)^{|\Y|}$.
By Lemma~\ref{M_2-gen}, $\M_4(N_{g,0})$ is generated by $\left\{yx^{\pm1}\overline{yx^{\pm1}}^{-1}\mid{}x\in\Y\cup\D,y\in\YB,\overline{yx^{\pm1}}\ne{}yx^{\pm1}\right\}$.

For $D_{1,j,k,l}\in\D$ and $y\in\YB$, we see
$$yD_{1,j,k,l}^{\pm1}\overline{yD_{1,j,k,l}^{\pm1}}^{-1}=yD_{1,j,k,l}^{\pm1}y^{-1}=(yD_{1,j,k,l}y^{-1})^{\pm1}\in{G}.$$
For $x\in\Y$ and $y\in\YB$, there are $z$, $w\in\Y$ such that either $y=zxw$, or $y=zw$ and $zxw\in\Y$.
When $y=zxw$, we see
\begin{eqnarray*}
yx\overline{yx}^{-1}&=&zxwxw^{-1}z^{-1}=z[x,w]z^{-1}\cdot{}zwx^2w^{-1}z^{-1},\\
yx^{-1}\overline{yx^{-1}}^{-1}&=&zxwx^{-1}w^{-1}z^{-1}=z[x,w]z^{-1}.
\end{eqnarray*}
When $y=zw$ and $zxw\in\Y$, we see
\begin{eqnarray*}
yx\overline{yx}^{-1}&=&zwxw^{-1}x^{-1}z^{-1}=z[w,x]z^{-1},\\
yx^{-1}\overline{yx^{-1}}^{-1}&=&zwx^{-1}w^{-1}x^{-1}z^{-1}=zwx^{-2}w^{-1}z^{-1}\cdot{}z[w,x]z^{-1}.
\end{eqnarray*}
Note that $z[w,x]z^{-1}=(z[x,w]z^{-1})^{-1}$ and $zwx^{-2}w^{-1}z^{-1}=(zwx^2w^{-1}x^{-1})^{-1}$.
Since $x^2$ is in $\B$, it suffices to show that $z[x,w]z^{-1}$ is in $G$.
Let $w=w_1w_2\cdots{}w_n$, where $w_1<w_2<\cdots<w_n\in\Y$.
Then we have
$$[x,w]=[x,w_1](w_1[x,w_2]w_1^{-1})\cdots(w_1w_2\cdots{}w_{n-1}[x,w_n]w_{n-1}^{-1}\cdots{}w_2^{-1}w_1^{-1})$$
(see \cite{IK1}).
Since we have that $zw_1w_2\cdots{}w_k$ is in $\YB$ and that $x<w_k<w_{k+1}$ for $1\le{k}\le{n-1}$, $z[x,w]z^{-1}$ is in $G$.

Thus we finish the proof.
\end{proof}

Next, we show the following lemma.

\begin{lem}\label{[x_1,x_2]}
For any $x_1$ and $x_2\in\Y$ with $x_1<x_2$, $[x_1,x_2]$ is a product of $zx^{\pm1}z^{-1}$ for $x\in\A\cup\B\cup\C$ and $z=1$, $z_1$ or $z_2z_3\in\YB$ with $z_1\le{}x_2$ and $z_2<z_3<x_2$.
\end{lem}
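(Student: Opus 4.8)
The plan is to reduce the computation of $[x_1,x_2]$ to expressing a single square of a product of two crosscap slides in terms of the generators in $\A\cup\B\cup\C$, by means of the elementary commutator identity
$$[x_1,x_2]=\left(x_1x_2^2x_1^{-1}\right)\cdot{}x_1^2\cdot\left(x_2x_1\right)^{-2},$$
which one verifies at once by expanding the right hand side (the middle factors $x_1^{-1}x_1^2x_1^{-1}$ collapse). The point of this particular form is that each conjugating element is a \emph{prefix} of the word in $x_1,x_2$, so the only conjugators that arise from the outer factors are $x_1$ (with $x_1\le{}x_2$) and the identity. First I would record that, by the definition of $\B$ and the relation $B_{i,j}=Y_{i,j}^2=Y_{j,i}^2$ from \cite{Sz1}, both $x_1^2$ and $x_2^2$ lie in $\B$; hence $x_1x_2^2x_1^{-1}$ and $x_1^2$ are already conjugates of $\B$-elements of the permitted shape, and the lemma is reduced to treating the factor $(x_2x_1)^2$.

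To handle $(x_2x_1)^2$ I would run a case analysis on how the index sets of $x_1=Y_{i,j}$ and $x_2=Y_{k,l}$ overlap. If the supports are disjoint, then $x_1$ and $x_2$ commute, $[x_1,x_2]=1$, and there is nothing to prove. If $x_1$ and $x_2$ share their \emph{first} index, that is $i=k$ (so $j<l$ by the order on $\Y$), then $(x_2x_1)^2=(Y_{i,l}Y_{i,j})^2$ is, up to the ordering convention in the definition of $\C$, the generator $C_{j,l;i}$; when the order of the two factors is opposite to the convention I would pass from $(Y_{i,l}Y_{i,j})^2$ to $(Y_{i,j}Y_{i,l})^2$ via $(ba)^2=b(ab)^2b^{-1}$, which costs only one conjugation by $x_2\le{}x_2$. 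This is the source of the $z=1$ and $z=z_1$ conjugators in the statement.

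The remaining configurations are those in which $x_1$ and $x_2$ share exactly one index that is not a common first index, together with the case $\{i,j\}=\{k,l\}$, i.e.\ $\{x_1,x_2\}=\{Y_{i,j},Y_{j,i}\}$. In each of these I would rewrite $(x_2x_1)^2$ using the crosscap slide relations of \cite{Sz1}, principally $t_{\alpha_{i,j}}^2=Y_{j,i}^{-1}Y_{i,j}$ (so that $A_{i,j}=(Y_{j,i}^{-1}Y_{i,j})^2$) together with the braid and commutation relations among the $Y$'s; these let one transport a product whose two factors do not share a first index into one that does, or into an $A$-type product, at the cost of conjugating by a single crosscap slide or by a product of two crosscap slides. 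The length-two conjugators $z_2z_3$ allowed in the statement are exactly what these manipulations produce.

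The main obstacle is the bookkeeping in this last step: one must check not merely that $(x_2x_1)^2$ reduces to $\A\cup\B\cup\C$, but that every conjugating element stays of the prescribed form, namely a single $z_1\le{}x_2$ or a product $z_2z_3$ with $z_2<z_3<x_2$. Since all indices occurring in the rewriting are drawn from $\{i,j,k,l\}$, which are themselves constrained by $x_1<x_2$, the conjugators can be kept below $x_2$; verifying the ordering inequalities, and that no conjugator exceeds length two in each overlap configuration, is the delicate point where the explicit relations from \cite{Sz1} and the total order on $\Y$ must be used with care.
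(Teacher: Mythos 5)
Your reduction via the identity $[x_1,x_2]=(x_1x_2^2x_1^{-1})\cdot x_1^2\cdot(x_2x_1)^{-2}$ is correct, and it does dispose of the $\B$-factors with admissible conjugators ($z=1$ and $z=x_1\le x_2$); the case $\{x_1,x_2\}=\{Y_{i,j},Y_{j,i}\}$ also goes through, since $B_{i,j}=Y_{i,j}^2=Y_{j,i}^2$ is a power of each factor and hence $(Y_{j,i}Y_{i,j})^2=B_{i,j}^2A_{i,j}$, and the shared-first-index case reduces to a $\C$-element at the cost of the single conjugator $x_2$ via $(ba)^2=b(ab)^2b^{-1}$. However, there is a genuine gap in your case enumeration. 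You dismiss the four-distinct-indices case with ``if the supports are disjoint, then $x_1$ and $x_2$ commute,'' and your list of remaining configurations covers only pairs sharing at least one index. Disjointness of the index sets $\{i,j\}$ and $\{k,l\}$ does \emph{not} imply disjointness of supports: when the indices interleave (for instance $i<k<j<l$) the curves $\alpha_{i,j}$ and $\alpha_{k,l}$ intersect and $[Y_{i,j},Y_{k,l}]\neq1$. This is in fact the longest case in the paper's proof; it requires expressing $Y_{k,l}Y_{i,j}^{-1}Y_{k,l}^{-1}$ as a word in $Y_{i,k}$, $Y_{i,l}$, $Y_{i,j}$, and it is precisely the case that produces the length-two conjugators $z_2z_3$ (such as $Y_{i,l}Y_{i,j}$) whose ordering against $x_2=Y_{k,l}$ must be checked. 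Your proposal omits this configuration outright.

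Beyond that, for the configurations you do list (a shared index that is not a common first index), the entire content of the lemma is the assertion that the crosscap-slide relations ``let one transport'' $(x_2x_1)^2$ into an $\A$- or $\C$-type product with conjugators of the prescribed shape; this is stated but never verified. The paper has no slicker mechanism here: it proves the lemma by an exhaustive six-way case analysis on the overlap pattern of $(Y_{i,j},Y_{k,l})$, each with several sub-orderings, explicitly rewriting $x_2x_1^{\pm1}x_2^{-1}$ or $x_1x_2x_1^{-1}$ via relations such as $Y_{k,j}Y_{i,j}^{-1}Y_{k,j}^{-1}=Y_{i,k}^{-2}Y_{i,j}^{-1}$ (for $i<j<k$) and then checking every conjugator against the total order on $\Y$. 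Your commutator identity is a reasonable organizing device and would slightly streamline two of the cases, but as written the proposal defers exactly the computations that constitute the proof and skips one nontrivial case entirely.
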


\begin{proof}
In this proof, we use relations on compositions of crosscap slides (for details, for instance see \cite{Sz1, O, KO2, IK1, KO3}).

Let $(x_1,x_2)=(Y_{i,j},Y_{j,i})$.
Then we calculate
\begin{eqnarray*}
[x_1,x_2]&=&Y_{i,j}^2(Y_{i,j}^{-1}Y_{j,i})^2Y_{j,i}^{-2}=B_{i,j}A_{i,j}^{-1}B_{i,j}^{-1}.
\end{eqnarray*}
Let $(x_1,x_2)=(Y_{i,j},Y_{i,k})$.
Then we calculate
\begin{eqnarray*}
[x_1,x_2]&=&(Y_{i,j}Y_{i,k})^2Y_{i,k}^{-2}\cdot{}Y_{i,k}Y_{i,j}^{-2}Y_{i,k}^{-1}\\
&=&
\left\{
\begin{array}{ll}
C_{j,k;i}B_{i,k}^{-1}\cdot{}x_2B_{i,j}^{-1}x_2^{-1}&(i<j<k),\\
x_1C_{j,k;i}x_1^{-1}\cdot{}B_{i,k}^{-1}\cdot{}x_2B_{j,i}^{-1}x_2^{-1}&(j<i<k),\\
C_{j,k;i}B_{k,i}^{-1}\cdot{}x_2B_{j,i}^{-1}x_2^{-1}&(j<k<i).
\end{array}
\right.
\end{eqnarray*}
Let $(x_1,x_2)=(Y_{i,j},Y_{k,j})$.
Then we calculate
\begin{eqnarray*}
[x_1,x_2]&=&Y_{i,j}(Y_{k,j}Y_{i,j}^{-1}Y_{k,j}^{-1})\\
&=&
\left\{
\begin{array}{ll}
Y_{i,j}(Y_{i,j}^{-2}Y_{i,k}^{-2}Y_{i,j})&(j<i<k),\\
Y_{i,j}(Y_{i,k}^{-2}Y_{i,j}^{-1})&(i<j<k),\\
Y_{i,j}(Y_{i,j}^{-2}Y_{i,k}^{-2}Y_{i,j})&(i<k<j)
\end{array}
\right.\\
&=&
\left\{
\begin{array}{ll}
Y_{i,j}^{-2}\cdot{}Y_{i,j}Y_{i,k}^{-2}Y_{i,j}^{-1}\cdot{}Y_{i,j}^2&(j<i<k),\\
Y_{i,j}Y_{i,k}^{-2}Y_{i,j}^{-1}&(i<j<k),\\
Y_{i,j}^{-2}\cdot{}Y_{i,j}Y_{i,k}^{-2}Y_{i,j}^{-1}\cdot{}Y_{i,j}^2&(i<k<j)
\end{array}
\right.\\
&=&
\left\{
\begin{array}{ll}
B_{j,i}^{-1}\cdot{}x_1B_{i,k}^{-1}x_1^{-1}\cdot{}B_{j,i}&(j<i<k),\\
x_1B_{i,k}^{-1}x_1^{-1}&(i<j<k),\\
B_{i,j}^{-1}\cdot{}x_1B_{i,k}^{-1}x_1^{-1}\cdot{}B_{i,j}&(i<k<j)
\end{array}
\right.
\end{eqnarray*}
(see Figure~\ref{y-comp}~(a)).
Let $(x_1,x_2)=(Y_{i,j},Y_{j,k})$.
Then we calculate
\begin{eqnarray*}
[x_1,x_2]&=&Y_{i,j}(Y_{j,k}Y_{i,j}^{-1}Y_{j,k}^{-1})\\
&=&
\left\{
\begin{array}{ll}
Y_{i,j}(Y_{i,k}^{-1}Y_{i,j}Y_{i,k})&(k<i<j),\\
Y_{i,j}(Y_{i,j}^{-2}Y_{i,k}^{-1}Y_{i,j}Y_{i,k}Y_{i,j}^2)&(i<k<j),\\
Y_{i,j}(Y_{i,k}^{-1}Y_{i,j}Y_{i,k})&(i<j<k)
\end{array}
\right.\\
&=&
\left\{
\begin{array}{ll}
Y_{i,j}Y_{i,k}^{-2}Y_{i,j}^{-1}(Y_{i,j}Y_{i,k})^2&(k<i<j),\\
Y_{i,j}^{-2}\cdot{}Y_{i,j}Y_{i,k}^{-2}Y_{i,j}^{-1}\cdot{}Y_{i,j}(Y_{i,k}Y_{i,j})^2Y_{i,j}^{-1}\cdot{}Y_{i,j}^2&(i<k<j),\\
Y_{i,j}Y_{i,k}^{-2}Y_{i,j}^{-1}(Y_{i,j}Y_{i,k})^2&(i<j<k)
\end{array}
\right.\\
&=&
\left\{
\begin{array}{ll}
x_1B_{k,i}^{-1}x_1^{-1}\cdot{}C_{k,j;i}&(k<i<j),\\
B_{i,j}^{-1}\cdot{}x_1B_{i,k}^{-1}x_1^{-1}\cdot{}x_1C_{k,j;i}x_1^{-1}\cdot{}B_{i,j}&(i<k<j),\\
x_1B_{i,k}^{-1}x_1^{-1}\cdot{}C_{j,k;i}&(i<j<k)
\end{array}
\right.
\end{eqnarray*}
(see Figure~\ref{y-comp}~(b)).
Let $(x_1,x_2)=(Y_{i,j},Y_{k,i})$.
Then we calculate
\begin{eqnarray*}
[x_1,x_2]&=&(Y_{i,j}Y_{k,i}Y_{i,j}^{-1})Y_{k,i}^{-1}\\
&=&
\left\{
\begin{array}{ll}
(Y_{k,i}^{-2}Y_{k,j}^{-1}Y_{k,i}^{-1}Y_{k,j}Y_{k,i}^2)Y_{k,i}^{-1}&(j<i<k),\\
(Y_{k,j}^{-1}Y_{k,i}^{-1}Y_{k,j})Y_{k,i}^{-1}&(i<j<k),\\
(Y_{k,i}^{-2}Y_{k,j}^{-1}Y_{k,i}^{-1}Y_{k,j}Y_{k,i}^2)Y_{k,i}^{-1}&(i<k<j)
\end{array}
\right.\\
&=&
\left\{
\begin{array}{ll}
Y_{k,i}^{-2}Y_{k,j}^{-2}\cdot{}Y_{k,j}Y_{k,i}^{-2}Y_{k,j}^{-1}(Y_{k,j}Y_{k,i})^2&(j<i<k),\\
(Y_{k,i}Y_{k,j})^{-2}Y_{k,i}Y_{k,j}^2Y_{k,i}^{-1}&(i<j<k),\\
Y_{k,i}^{-2}\cdot{}Y_{k,i}(Y_{k,j}Y_{k,i})^{-2}Y_{k,i}^{-1}\cdot{}Y_{k,i}Y_{k,j}^2Y_{k,i}^{-1}\cdot{}Y_{k,i}^2&(i<k<j)
\end{array}
\right.\\
&=&
\left\{
\begin{array}{ll}
B_{i,k}^{-1}B_{j,k}^{-1}\cdot{}Y_{k,j}B_{i,k}^{-1}Y_{k,j}^{-1}\cdot{}C_{j,i;k}&(j<i<k),\\
C_{i,j;k}^{-1}\cdot{}x_2B_{j,k}x_2^{-1}&(i<j<k),\\
B_{i,k}^{-1}\cdot{}x_2C_{i,j;k}^{-1}x_2^{-1}\cdot{}x_2B_{k,j}x_2^{-1}\cdot{}B_{i,k}&(i<k<j)
\end{array}
\right.
\end{eqnarray*}
(see Figure~\ref{y-comp}~(c)).
Note that $Y_{k,j}<x_2$ when $j<i<k$.
Let $(x_1,x_2)=(Y_{i,j},Y_{k,l})$.
Then we calculate
\begin{eqnarray*}
[x_1,x_2]&=&Y_{i,j}(Y_{k,l}Y_{i,j}^{-1}Y_{k,i}^{-1})\\
&=&
\left\{
\begin{array}{ll}
Y_{i,j}(Y_{i,l}^{-1}Y_{i,k}^2Y_{i,l}Y_{i,k}^2Y_{i,j}^{-1}Y_{i,k}^{-2}Y_{i,l}^{-1}Y_{i,k}^{-2}Y_{i,l})&(i<k<j<l),\\
Y_{i,j}(Y_{i,k}^{-2}Y_{i,l}^{-1}Y_{i,k}^{-2}Y_{i,l}Y_{i,j}^{-1}Y_{i,l}^{-1}Y_{i,k}^2Y_{i,l}Y_{i,k}^2)&(i<l<j<k),\\
Y_{i,j}(Y_{i,l}^{-1}Y_{i,k}^2Y_{i,l}Y_{i,k}^2Y_{i,j}^{-1}Y_{i,k}^{-2}Y_{i,l}^{-1}Y_{i,k}^{-2}Y_{i,l})&(j<l<i<k),\\
Y_{i,j}(Y_{i,l}^{-1}Y_{i,k}^2Y_{i,l}Y_{i,k}^2Y_{i,j}^{-1}Y_{i,k}^{-2}Y_{i,l}^{-1}Y_{i,k}^{-2}Y_{i,l})&(l<i<k<j)
\end{array}
\right.\\
&=&
\left\{
\begin{array}{ll}
Y_{i,j}Y_{i,l}^{-2}Y_{i,j}^{-1}\cdot{}Y_{i,j}Y_{i,l}Y_{i,k}^2Y_{i,l}^{-1}Y_{i,j}^{-1}\cdot{}Y_{i,j}Y_{i,l}^2Y_{i,j}^{-1}&\\
Y_{i,j}Y_{i,k}^2Y_{i,j}^{-1}\cdot{}Y_{i,k}^{-2}Y_{i,l}^{-2}\cdot{}Y_{i,l}Y_{i,k}^{-2}Y_{i,l}^{-1}\cdot{}Y_{i,l}^2&(i<k<j<l),\\
Y_{i,j}Y_{i,k}^{-2}Y_{i,j}^{-1}\cdot{}Y_{i,j}Y_{i,l}^{-2}Y_{i,j}^{-1}&\\
~[Y_{i,l}Y_{i,j}]^{-1}\cdot{}Y_{i,l}Y_{i,j}Y_{i,k}^{-2}Y_{i,j}^{-1}Y_{i,l}^{-1}\cdot[Y_{i,l}Y_{i,j}]&\\
Y_{i,j}Y_{i,l}^2Y_{i,j}^{-1}\cdot{}Y_{i,l}^{-2}\cdot{}Y_{i,l}Y_{i,k}^2Y_{i,l}^{-1}\cdot{}Y_{i,l}^2Y_{i,k}^2&(i<l<j<k),\\
Y_{i,j}Y_{i,l}^{-2}Y_{i,j}^{-1}\cdot{}Y_{i,j}Y_{i,l}Y_{i,k}^2Y_{i,l}^{-1}Y_{i,j}^{-1}\cdot{}Y_{i,j}Y_{i,l}^2Y_{i,j}^{-1}&\\
Y_{i,j}Y_{i,k}^2Y_{i,j}^{-1}\cdot{}Y_{i,k}^{-2}Y_{i,l}^{-2}\cdot{}Y_{i,l}Y_{i,k}^{-2}Y_{i,l}^{-1}\cdot{}Y_{i,l}^2&(j<l<i<k),\\
Y_{i,j}Y_{i,l}^{-2}Y_{i,j}^{-1}\cdot[Y_{i,l},Y_{i,j}]^{-1}\cdot{}Y_{i,l}Y_{i,j}Y_{i,k}^2Y_{i,j}^{-1}Y_{i,l}^{-1}&\\
~[Y_{i,l},Y_{i,j}]\cdot{}Y_{i,j}Y_{i,l}^2Y_{i,j}^{-1}\cdot{}Y_{i,j}Y_{i,k}^2Y_{i,j}^{-1}\\
Y_{i,k}^{-2}Y_{i,l}^{-2}\cdot{}Y_{i,l}Y_{i,k}^{-2}Y_{i,l}^{-1}\cdot{}Y_{i,l}^2&(l<i<k<j)
\end{array}
\right.\\
&=&
\left\{
\begin{array}{ll}
x_1B_{i,l}^{-1}x_1^{-1}\cdot{}x_1Y_{i,l}B_{i,k}Y_{i,l}^{-1}x_1^{-1}\cdot{}x_1B_{i,l}x_1^{-1}&\\
x_1B_{i,k}x_1^{-1}\cdot{}B_{i,k}^{-1}B_{i,l}^{-1}\cdot{}Y_{i,l}B_{i,k}^{-1}Y_{i,l}^{-1}\cdot{}B_{i,l}&(i<k<j<l),\\
x_1B_{i,k}^{-1}x_1^{-1}\cdot{}x_1B_{i,l}^{-1}x_1^{-1}&\\
~[Y_{i,l}Y_{i,j}]^{-1}\cdot{}Y_{i,l}x_1B_{i,k}^{-1}x_1^{-1}Y_{i,l}^{-1}\cdot[Y_{i,l}Y_{i,j}]&\\
x_1B_{i,l}x_1^{-1}\cdot{}B_{i,l}^{-1}\cdot{}Y_{i,l}B_{i,k}Y_{i,l}^{-1}\cdot{}B_{i,l}B_{i,k}&(i<l<j<k),\\
x_1B_{l,i}^{-1}x_1^{-1}\cdot{}x_1Y_{i,l}B_{i,k}Y_{i,l}^{-1}x_1^{-1}\cdot{}x_1B_{l,i}x_1^{-1}&\\
x_1B_{i,k}x_1^{-1}\cdot{}B_{i,k}^{-1}B_{l,i}^{-1}\cdot{}Y_{i,l}B_{i,k}^{-1}Y_{i,l}^{-1}\cdot{}B_{l,i}&(j<l<i<k),\\
x_1B_{l,i}^{-1}x_1^{-1}\cdot[Y_{i,l},Y_{i,j}]^{-1}\cdot{}Y_{i,l}x_1B_{i,k}x_1^{-1}Y_{i,l}^{-1}&\\
~[Y_{i,l},Y_{i,j}]\cdot{}x_1B_{l,i}x_1^{-1}\cdot{}x_1B_{i,k}x_1^{-1}\\
B_{i,k}^{-1}B_{l,i}^{-1}\cdot{}Y_{i,l}B_{i,k}^{-1}Y_{i,l}^{-1}\cdot{}B_{l,i}&(l<i<k<j)
\end{array}
\right.
\end{eqnarray*}
(see Figure~\ref{y-comp}~(d)).
Note that $x_1<Y_{i,l}<x_2$ when $i<k<j<l$ or $j<l<i<k$ and $Y_{i,l}<x_1$ when $i<l<j<k$ or $l<i<k<j$.
For the other cases we have $[x_1,x_2]=1$.

\begin{figure}[htbp]
\subfigure[$Y_{k,j}Y_{i,j}^{-1}Y_{k,j}^{-1}$.]{\includegraphics{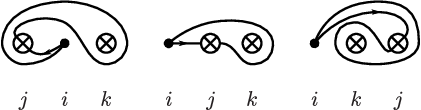}}\\
\subfigure[$Y_{j,k}Y_{i,j}^{-1}Y_{j,k}^{-1}$.]{\includegraphics{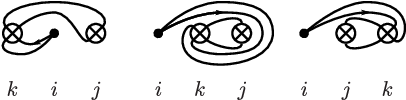}}\\
\subfigure[$Y_{i,j}Y_{k,i}Y_{i,j}^{-1}$.]{\includegraphics{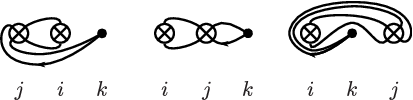}}\\
\subfigure[$Y_{k,l}Y_{i,j}^{-1}Y_{k,l}^{-1}$.]{\includegraphics{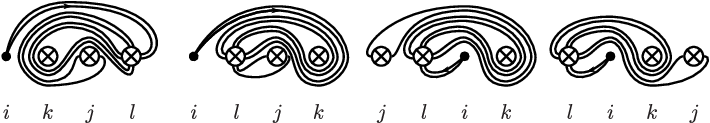}}
\caption{}\label{y-comp}
\end{figure}

Thus we finish the proof.
\end{proof}

We now prove Theorem~\ref{main-3}.

Let $H$ be the subgroup of $\M(N_{g,0})$ generated by the elements in Theorem~\ref{main-3}.
By Lemmas~\ref{M_4-gen} and \ref{[x_1,x_2]}, it suffices to show that $yzxz^{-1}y^{-1}$ is in $H$ for $x\in\A\cup\B\cup\C$, $y=y_1y_2\cdots{}y_m\in\YB$ and $z=1$, $z_1$ or $z_2z_3\in\YB$ with $y_1<y_2<\cdots<y_m<x_2$, $z_1\le{}x_2$ and $z_2<z_3<x_2$.

Let $z=1$.
Since $yz$ is in $\YB$, we have that $yzxz^{-1}y^{-1}$ is in $H$.

Let $z=z_1$.
When $z_1>y_m$, since $yz$ is in $\YB$, we have that $yzxz^{-1}y^{-1}$ is in $H$.
When there is $1\le{k}\le{m}$ such that $y_{k-1}<z_1<y_k$, where $y_0=1$, we see
\begin{eqnarray}
yzxz^{-1}y^{-1}
&=&
y_1\cdots{}y_{k-1}[z_1,y_k\cdots{}y_m]^{-1}y_{k-1}^{-1}\cdots{}y_1^{-1}\label{1}\\
&&y_1\cdots{}y_{k-1}z_1y_k\cdots{}y_mxy_m^{-1}\cdots{}y_k^{-1}z_1^{-1}y_{k-1}^{-1}\cdots{}y_1^{-1}\label{2}\\
&&y_1\cdots{}y_{k-1}[z_1,y_k\cdots{}y_m]y_{k-1}^{-1}\cdots{}y_1^{-1}\label{3}.
\end{eqnarray}
Since $y_1\cdots{}y_{k-1}z_1y_k\cdots{}y_m$ is in $\YB$, we have that \eqref{2} is in $H$.
When there is $1\le{k}\le{m}$ such that $z_1=y_k$, we see
\begin{eqnarray}
yzxz^{-1}y^{-1}
&=&
y_1\cdots{}y_{k-1}[z_1,y_{k+1}\cdots{}y_m]y_{k-1}^{-1}\cdots{}y_1^{-1}\label{4}\\
&&y_1\cdots{}y_{k-1}y_{k+1}\cdots{}y_mz_1^2y_m^{-1}\cdots{}y_{k+1}^{-1}y_{k-1}^{-1}\cdots{}y_1^{-1}\label{5}\\
&&y_1\cdots{}y_{k-1}y_{k+1}\cdots{}y_mxy_m^{-1}\cdots{}y_{k+1}^{-1}y_{k-1}^{-1}\cdots{}y_1^{-1}\label{6}\\
&&y_1\cdots{}y_{k-1}y_{k+1}\cdots{}y_mz_1^{-2}y_m^{-1}\cdots{}y_{k+1}^{-1}y_{k-1}^{-1}\cdots{}y_1^{-1}\label{7}\\
&&y_1\cdots{}y_{k-1}[z_1,y_{k+1}\cdots{}y_m]^{-1}y_{k-1}^{-1}\cdots{}y_1^{-1}\label{8}
\end{eqnarray}
Since $y_1\cdots{}y_{k-1}y_k\cdots{}y_m$ is in $\YB$ and $z_1^2$ is in $\B$, we have that \eqref{5}, \eqref{6} and \eqref{7} are in $H$.

Let $z=z_2z_3$.
When $z_2>y_m$, since $yz$ is in $\YB$, we have that $yzxz^{-1}y^{-1}$ is in $H$.
When there is $1\le{k}\le{m}$ such that $y_{k-1}<z_2<y_k$, where $y_0=1$, we see
\begin{eqnarray}
yzxz^{-1}y^{-1}
&=&
y_1\cdots{}y_{k-1}[z_2,y_k\cdots{}y_m]^{-1}y_{k-1}^{-1}\cdots{}y_1^{-1}\label{9}\\
&&y_1\cdots{}y_{k-1}z_2y_k\cdots{}y_mz_3xz_3^{-1}y_m^{-1}\cdots{}y_k^{-1}z_2^{-1}y_{k-1}^{-1}\cdots{}y_1^{-1}\label{10}\\
&&y_1\cdots{}y_{k-1}[z_2,y_k\cdots{}y_m]y_{k-1}^{-1}\cdots{}y_1^{-1}.\label{11}
\end{eqnarray}
\eqref{10} is attributed to the case $z=z_1$.
When there is $1\le{k}\le{m}$ such that $z_2=y_k$, we see
\begin{eqnarray}
yzxz^{-1}y^{-1}
&=&
y_1\cdots{}y_{k-1}[z_2,y_{k+1}\cdots{}y_m]y_{k-1}^{-1}\cdots{}y_1^{-1}\label{12}\\
&&y_1\cdots{}y_{k-1}y_{k+1}\cdots{}y_mz_2^2y_m^{-1}\cdots{}y_{k+1}^{-1}y_{k-1}^{-1}\cdots{}y_1^{-1}\label{13}\\
&&y_1\cdots{}y_{k-1}y_{k+1}\cdots{}y_mz_3xz_3^{-1}y_m^{-1}\cdots{}y_{k+1}^{-1}y_{k-1}^{-1}\cdots{}y_1^{-1}\label{14}\\
&&y_1\cdots{}y_{k-1}y_{k+1}\cdots{}y_mz_2^{-2}y_m^{-1}\cdots{}y_{k+1}^{-1}y_{k-1}^{-1}\cdots{}y_1^{-1}\label{15}\\
&&y_1\cdots{}y_{k-1}[z_2,y_{k+1}\cdots{}y_m]^{-1}y_{k-1}^{-1}\cdots{}y_1^{-1}.\label{16}
\end{eqnarray}
Since $y_1\cdots{}y_{k-1}y_{k+1}\cdots{}y_m$ is in $\YB$ and $z_2^2$ is in $\B$, we have that \eqref{13} and \eqref{15} are in $H$.
In addition, \eqref{14} is attributed to the case $z=z_1$.

Finally, we consider \eqref{1}, \eqref{3}, \eqref{4}, \eqref{8}, \eqref{9}, \eqref{11}, \eqref{12} and \eqref{16}.
We see
\begin{eqnarray*}
\eqref{3}
&=&
y_1\cdots{}y_{k-1}[z_1,y_k]y_{k-1}^{-1}\cdots{}y_1^{-1}\\
&&y_1\cdots{}y_k[z_1,y_{k+1}]y_k^{-1}\cdots{}y_1^{-1}\\
&&\vdots\\
&&y_1\cdots{}y_{m-1}[z_1,y_m]y_{m-1}^{-1}\cdots{}y_1^{-1}.
\end{eqnarray*}
For $k\le{n}\le{m}$, by Lemma~\ref{[x_1,x_2]}, $[z_1,y_n]$ is a product of $z^\prime(x^\prime)^{\pm1}(z^\prime)^{-1}$ for $x^\prime\in\A\cup\B\cup\C$ and $z^\prime=1$, $z_1^\prime$ or $z_2^\prime{}z_3^\prime\in\YB$ with $z_1^\prime\le{}y_n$ and $z_2^\prime<z_3^\prime<y_n$.
By similar calculations, $y_1\cdots{}y_{n-1}z^\prime(x^\prime)^{\pm1}(z^\prime)^{-1}y_{n-1}^{-1}\cdots{}y_1^{-1}$, and so \eqref{3}, similarly also \eqref{1}, \eqref{4}, \eqref{8}, \eqref{9}, \eqref{11}, \eqref{12} and \eqref{16}, are attributed to \eqref{1}-\eqref{16}.
Remember that $y_m$ is less than $x_2$.
Now, $y_{n-1}$ is less than $y_m$.
Hence repeating similar calculations, \eqref{1}, \eqref{3}, \eqref{4}, \eqref{8}, \eqref{9}, \eqref{11}, \eqref{12} and \eqref{16} are attributed \eqref{2}, \eqref{5}, \eqref{6}, \eqref{7}, \eqref{13} and \eqref{15}. 
Therefore these are in $H$.

Thus, we conclude that $yzxz^{-1}y^{-1}$ is in $H$ for $x\in\A\cup\B\cup\C$, $y=y_1y_2\cdots{}y_m\in\YB$ and $z=1$, $z_1$ or $z_2z_3\in\YB$ with $y_1<y_2<\cdots<y_m<x_2$, $z_1<x_2$ and $z_2<z_3<x_2$, and so complete the proof.

\begin{rem}
By Theorem~\ref{main-1} and \cite{IK2}, for $l\ge3$, we have
$$\M_{2^{l-1}}(N_{g,0})/\M_{2^l}(N_{g,0})\cong\G_{2^{l-1}}(g-1)/\G_{2^l}(g-1)\cong(\Z/2\Z)^{(g-1)^2-1}$$
Let $\X$ be a finite generating set for $\M_{2^{l-1}}(N_{g,0})$ and let $\z$ be
$$\z=\{A_{i,g-1}^{2^{l-3}},C_{j,g;k}^{2^{l-3}}\mid1\le{i}\le{g-2},1\le{j,k}\le{g-1}~\textrm{with}~j\neq{}k\}.$$
We take one total order of $\z$.
Let $\ZB$ be
$$\ZB=\{z_1z_2\cdots{}z_k\mid0\le{k}\le|\z|,z_i\in\z,z_1<z_2<\cdots<z_k\}.$$
We can regard $\ZB=\left(\Z/2\Z\right)^{|\z|}=(\Z/2\Z)^{(g-1)^2-1}$ as a set.
Note that $\ZB$ is a Schreier transversal for $\M_{2^l}(N_{g,0})$ in $\M_{2^{l-1}}(N_{g,0})$.
For $\varphi\in\M_{2^{l-1}}(N_{g,0})$, we denote by $\overline{\varphi}$ the element of $\ZB$ corresponding to the image of $\varphi$ in $\left(\Z/2\Z\right)^{|\z|}$.
By the Reidemeister-Schreier method, inductively we can obtain the finite generating set for $\M_{2^l}(N_{g,0})$  as 
$$\left\{zx^{\pm1}\overline{zx^{\pm1}}^{-1}\mid{}x\in\X,z\in\ZB,\overline{zx^{\pm1}}\ne{}zx^{\pm1}\right\}.$$
\end{rem}

\section{On a finite generating set for $\M_d(N_{g,n})$ with $n\ge1$}\label{5}

For any $d\ge2$, $g\ge1$ and $n\ge0$, since $\M(N_{g,n})$ can be finitely generated and $\M_d(N_{g,n})$ is an finite index subgroup of $\M(N_{g,n})$, $\M_d(N_{g,n})$ can be also finitely generated.
We put a finite generating set $\e_0$ for $\M_d(N_{g,0})$.
For $n\ge1$, let $\e_n$ be the set consisting of any lifts by $\M_d(N_{g,n})\to\M_d(N_{g,0})$ of any elements of $\e_0$, and for $1\le{l}\le{n}$, let
\begin{eqnarray*}
\f_l&=&\left\{t_{\alpha_{i,g}}^d,t_{\alpha_{i;l}}^d,t_{\delta_l},t_{\epsilon_{j,l}},t_{\zeta_{k,l}},t_{\overline{\zeta}_{k,l}},t_{\eta_{i_1,i_2}}
\left|
\begin{array}{l}
1\le{i}\le{g-1},1\le{j}\le{g},\\
1\le{i_1<i_2}\le{g-1},1\le{k}<l
\end{array}
\right.
\right\},\\
\g_l&=&\{(t_{\alpha_{1,g}}^{-d}t_{\alpha_{1;l}}^d)^{m_1}(t_{\alpha_{2,g}}^{-d}t_{\alpha_{2;l}}^d)^{m_2}\cdots(t_{\alpha_{g-1,g}}^{-d}t_{\alpha_{g-1;l}}^d)^{m_{g-1}}\mid0\le{m_i}\le{d-1}\},\\
\h_n&=&\bigcup_{1\le{l}\le{n}}\{zyz^{-1}\mid{}y\in\f_l,z\in\g_l\},
\end{eqnarray*}
where twist simple closed curves are as shown in Figures~\ref{alpha}, \ref{beta-gamma-delta-epsilon-zeta} and \ref{ez}.
We define $\f_0=\g_0=\h_0=\emptyset$.
In addition, denote by $\h_m\subset\M_d(N_{g,n})$ lift by $\M_d(N_{g,n})\to\M_d(N_{g,m})$ of $\h_m\subset\M_d(N_{g,m})$ for $1\le{m}<n$.

In this section, we prove the following theorem.

\begin{thm}\label{gen-n>0}
For $d\ge2$, $g\ge1$ and $n\ge1$, $\M_d(N_{g,n})$ is generated by $\e_n$ and $\h_n$.
\end{thm}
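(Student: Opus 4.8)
The plan is to argue by induction on $n$, the base case $n=0$ being the definition of $\e_0$ (with $\f_0=\g_0=\h_0=\emptyset$, so that $\M_d(N_{g,0})=\langle\e_0\rangle$). For the inductive step I would run everything through the two short exact sequences attached to the capping, forgetful and point pushing maps. The capping map gives $1\to\langle t_{\delta_n}\rangle\to\M_d(N_{g,n})\xrightarrow{\CC}\CC(\M_d(N_{g,n}))\to1$, and Proposition~\ref{PFC} gives exactness of $\ker\theta\xrightarrow{\PP}\CC(\M_d(N_{g,n}))\xrightarrow{\FF}\M_d(N_{g,n-1})\to1$. Writing $H=\langle\e_n\cup\h_n\rangle\subseteq\M_d(N_{g,n})$, the goal $H=\M_d(N_{g,n})$ then splits into three checks: that $\FF(\CC(H))=\M_d(N_{g,n-1})$, that $t_{\delta_n}\in H$, and that $\PP(\ker\theta)$ lifts into $H$. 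Once these hold, any $f\in\M_d(N_{g,n})$ can be adjusted by an element of $H$ to land in $\ker(\FF\circ\CC)=\CC^{-1}(\PP(\ker\theta))=\langle t_{\delta_n},\,\text{lifts of }\PP(\ker\theta)\rangle\subseteq H$, forcing $H=\M_d(N_{g,n})$.

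The first two checks are essentially bookkeeping. By the inductive hypothesis $\M_d(N_{g,n-1})=\langle\e_{n-1}\cup\h_{n-1}\rangle$, and under $\FF\circ\CC$ the set $\e_n$ maps to lifts of $\e_0$ while the $1\le l\le n-1$ part of $\h_n$ maps onto $\h_{n-1}$; hence $\FF(\CC(H))\supseteq\langle\e_{n-1}\cup\h_{n-1}\rangle=\M_d(N_{g,n-1})$, and the reverse inclusion is automatic. For the second, $t_{\delta_n}\in\f_n$, and since the identity element ($m_1=\dots=m_{g-1}=0$) lies in $\g_n$ we have $\f_n\subseteq\h_n$, so $t_{\delta_n}\in H$.

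For the kernel I would invoke Proposition~\ref{ker-theta}: in $\pi_1^+(N_{g,n-1},\ast)$ the group $\ker\theta$ is normally generated by $x_i^2$, $y_k$, $z_k$, $(x_ix_jx_g)^2$ and $(x_ix_g)^d$. Using the identifications already recorded in Subsection~\ref{3.2}, namely $\CC(t_{\epsilon_{i,n}})=\PP(x_i^2)$, $\CC(t_{\zeta_{k,n}}t_{\delta_k}^{-1})=\PP(y_k)$, $\CC(t_{\overline{\zeta}_{k,n}}t_{\delta_k}^{-1})=\PP(z_k)$, $\CC(t_{\eta_{i,j;n}})=\PP((x_ix_jx_g)^2)$ and $\CC(t_{\alpha_{i,g}}^{-d}t_{\alpha_{i;n}}^d)=\PP((x_ix_g)^d)$, the members of $\f_n$ (combined with the twists $t_{\delta_k}$ for $k<n$, which are already in $H$ via the $l<n$ part of $\h_n$) are precisely $\CC$-lifts of the $\PP$-images of these normal generators. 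Thus $\f_n$ supplies lifts of a normal generating set of $\PP(\ker\theta)$ inside $\CC(\M_d(N_{g,n}))$.

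The main obstacle is the passage from this \emph{normal} generating set to the genuine finite generating set $\{zyz^{-1}:y\in\f_n,\ z\in\g_n\}$. The decisive structural input is that $\ker\theta$ has finite index $d^{g-1}$ in $\pi_1^+(N_{g,n-1},\ast)$, since $\mathrm{Im}\,\theta\cong(\Z/d\Z)^{g-1}$; a transversal is given by the monomials in the $x_ix_g$, so only finitely many conjugates of the normal generators are ever needed to generate $\PP(\ker\theta)$ as a subgroup. I would realise those conjugates by conjugating $\f_n$ with the elements of $\g_n$, which by $\CC(t_{\alpha_{i,g}}^{-d}t_{\alpha_{i;n}}^d)=\PP((x_ix_g)^d)$ correspond to point pushes along the $(x_ix_g)^d$, supplemented by the conjugations already available from the inductive generators. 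The delicate step — where the explicit relations among crosscap slides and Dehn twists genuinely enter — is to verify that these finitely many conjugates exhaust exactly the $\pi_1^+$-conjugates of the normal generators that lie in $\ker\theta$, i.e.\ that the subgroup they generate is all of $\PP(\ker\theta)$ and not merely a normal-closure-generating subset. Granting this, lifting by $\CC$ (whose kernel $\langle t_{\delta_n}\rangle$ is already in $H$) places the lifts of $\PP(\ker\theta)$ in $H$, and the induction closes, the reduction to curves with second index exactly $n$ proceeding just as in the final paragraph of Subsection~\ref{3.2}.
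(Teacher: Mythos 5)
Your overall architecture coincides with the paper's: induction on $n$, the exact sequences coming from the capping, forgetful and point-pushing maps via Proposition~\ref{PFC}, and the identification of the conjugates $zyz^{-1}$ ($y\in\f_n$, $z\in\g_n$) as $\CC$-lifts of $\PP$-images of conjugated loops. The bookkeeping steps are fine: $\e_n$ together with the $l<n$ part of $\h_n$ surjects onto the inductive generating set of $\M_d(N_{g,n-1})$, and $t_{\delta_n}\in\f_n\subset\h_n$ takes care of $\ker\CC$.

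The problem is exactly the step you label ``delicate'' and then grant yourself. Knowing from Proposition~\ref{ker-theta} that $\ker\theta$ is \emph{normally} generated in $\pi_1^+(N_{g,n-1},\ast)$ by $x_i^2$, $y_k$, $z_k$, $(x_ix_jx_g)^2$, $(x_ix_g)^d$, and knowing that $\ker\theta$ has finite index $d^{g-1}$ with transversal $\GG$, does \emph{not} imply that the finitely many conjugates $\{wsw^{-1}:w\in\GG\}$ generate $\ker\theta$. The only thing finite index buys in general is this: writing an arbitrary element as $g=kw$ with $k\in\ker\theta$ and $w\in\GG$, each conjugate $gsg^{-1}$ equals $k(wsw^{-1})k^{-1}$, so the set $\{wsw^{-1}\}$ normally generates $\ker\theta$ \emph{inside} $\ker\theta$ --- which is strictly weaker than generating it, and is the same normal-closure statement you started from, just relativized. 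The paper closes this gap with Proposition~\ref{gen-ker-theta}, which is the actual mathematical content of Section~\ref{5}: it applies the Reidemeister--Schreier method to the Schreier transversal $\GG$ to get the generators $wx^{\pm1}\overline{wx^{\pm1}}^{-1}$ of $\ker\theta$, and then rewrites each of these as a product of the claimed conjugates via explicit identities in $\pi_1^+(N_{g,n-1},\ast)$ such as $[x_{i,g},x_{j,g}]=x_{i,g}x_{j,j}x_{i,g}^{-1}\cdot x_{i,g}x_{g,g}x_{i,g}^{-1}\cdot x_{i,i}\cdot x_{j,g}x_{i,j,g}^{-2}x_{j,g}^{-1}$, together with a descending induction on the commutator decomposition $[x^{\pm1},w_2]=\prod X_{j,m}$. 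Without carrying out that rewriting (or an equivalent argument), you have only shown that $\e_n\cup\h_n$ normally generates $\M_d(N_{g,n})$ modulo the inductive step, not that it generates it, so the theorem as stated is not established.
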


Remember the homomorphism $\theta:\pi_1^+(N_{g,n-1})\to(\Z/d\Z)^g$ defined in Subsection~\ref{3.2}.
Let $x_{i_1,i_2,\dots,i_k}=x_{i_1}x_{i_2}\cdots{}x_{i_k}$ and let
$$\GG=\{x_{1,g}^{m_1}x_{2,g}^{m_2}\cdots{}x_{g-1,g}^{m_{g-1}}\mid0\le{m_i}\le{d-1}\}.$$
First, we prove the following proposition.

\begin{prop}\label{gen-ker-theta}
For $d\ge2$, $g\ge1$ and $n\ge1$, $\ker\theta$ is generated by $wx_{i,g}^dw^{-1}$, $wx_{j,j}w^{-1}$, $wy_kw^{-1}$, $wz_kw^{-1}$ and $wx_{i_1,i_2,g}^2w^{-1}$ for $1\le{i}\le{g-1}$, $1\le{j}\le{g}$, $1\le{k}\le{n-1}$, $1\le{i_1<i_2}<g$ and $w\in\GG$.
\end{prop}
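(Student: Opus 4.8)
The plan is to run the Reidemeister--Schreier method with the finite transversal $\GG$. By the discussion preceding the statement, $\theta$ restricts to a bijection of $\GG$ onto $\mathrm{Im}\,\theta\cong(\Z/d\Z)^{g-1}$ (the $\theta(x_{i,g})$ form a basis, being the mod $d$ reduction of the free basis $\bar\theta(x_{i,g})$ of $\mathrm{Im}\,\bar\theta$), so $\GG$ is a Schreier transversal for $\ker\theta$ in $\pi_1^+(N_{g,n-1},\ast)$, and the latter group is generated by $S=\{x_{i,g}\ (1\le i\le g-1),\ x_j^2\ (1\le j\le g),\ y_k,\ z_k\}$. Reidemeister--Schreier then shows that $\ker\theta$ is generated by the Schreier elements $w s\,\overline{ws}^{\,-1}$ with $w\in\GG$ and $s\in S$, where $\overline{\phantom{x}}$ denotes passage to the representative in $\GG$. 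It therefore suffices to rewrite each such element as a product of the claimed generators.

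For $s\in\{x_j^2,y_k,z_k\}$, which already lie in $\ker\theta$, one has $\overline{ws}=w$, so $w s\,\overline{ws}^{\,-1}=wsw^{-1}$; these are exactly the generators $w x_{j,j}w^{-1}$, $w y_k w^{-1}$, $w z_k w^{-1}$ (recall $x_{j,j}=x_j^2$). The only remaining case is $s=x_{i,g}$. Writing $w=x_{1,g}^{m_1}\cdots x_{g-1,g}^{m_{g-1}}=vu$ with $v=x_{1,g}^{m_1}\cdots x_{i,g}^{m_i}$ and $u=x_{i+1,g}^{m_{i+1}}\cdots x_{g-1,g}^{m_{g-1}}$, a direct computation gives $w x_{i,g}\,\overline{wx_{i,g}}^{\,-1}=v\,[u,x_{i,g}]\,v^{-1}$ when $m_i<d-1$; when $m_i=d-1$ the same computation splits off one factor $v'x_{i,g}^{d}(v')^{-1}$, a $\GG$-conjugate of the named generator $x_{i,g}^{d}$, and again leaves a $\GG$-conjugate of a commutator $[u,x_{i,g}]$. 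Expanding by $[ab,c]=a[b,c]a^{-1}[a,c]$ reduces everything to $\GG$-conjugates of the basic commutators $[x_{l,g},x_{i,g}]$ with $i<l<g$, the conjugating prefixes all lying in $\GG$.

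The heart of the proof is thus the identity rewriting $[x_{l,g},x_{i,g}]$ through the three-letter squares. Starting from $(x_ix_lx_g)^2=x_i\,[x_{l,g},x_{i,g}]\,x_{i,g}\,x_l^2\,x_g$, which one verifies by a one-line cancellation, I solve for the commutator and then regroup the single one-sided letters $x_i^{\pm1},x_g^{\pm1}$ into two-sided blocks: each $x_i$ or $x_g$ must be absorbed either into a factor $x_{i,g}^{\pm1}=x_ix_g$ or into one of the squares $x_{i,i},x_{l,l},x_{g,g}$, so that all surviving conjugations are by elements of $\GG$. After this bookkeeping $[x_{l,g},x_{i,g}]$ becomes a product of $\GG$-conjugates of $x_{i,l,g}^2$, $x_{i,i}$, $x_{l,l}$ and $x_{g,g}$, all of which are among the claimed generators, completing the reduction.

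I expect this last regrouping to be the main obstacle. The subtlety is that $\pi_1^+(N_{g,n-1},\ast)$ admits conjugation only by two-sided loops, whereas solving the displayed relation for $[x_{l,g},x_{i,g}]$ naively produces stray one-sided factors $x_i$ and $x_g$; one must check that these always occur in even combinations assembling into the allowed conjugators in $\GG$ and into the squares. A computation in the free nilpotent quotient of class two confirms that this is possible---there $[x_{l,g},x_{i,g}]$ and $(x_ix_lx_g)^2$ differ only by a conjugate of $x_{g,g}$ and a correction lying in the squares---so the identity closes up. Finally, the case $n=1$, where $\pi_1$ carries the surface relation, needs no separate treatment, since every word identity used already holds in the free group on the $x_i$ and descends to the quotient.
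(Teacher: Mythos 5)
Your overall strategy coincides with the paper's: Reidemeister--Schreier over the transversal $\GG$, the immediate disposal of the Schreier generators coming from $x_{j,j}$, $y_k$, $z_k$ (which already lie in $\ker\theta$), the splitting $w=vu$ with the case distinction on $m_i$ that peels off $\GG$-conjugates of $x_{i,g}^{\pm d}$, the reduction of $[u,x_{i,g}]$ to basic commutators via $[ab,c]=a[b,c]a^{-1}\cdot[a,c]$ (the paper's terms $X_{j,m}$), and the exact identity $(x_ix_lx_g)^2=x_i[x_{l,g},x_{i,g}]x_{i,g}x_l^2x_g$, which is correct in the free group and is equivalent to the paper's formula $[x_{i,g},x_{j,g}]=x_{i,g}x_{j,j}x_{i,g}^{-1}\cdot x_{i,g}x_{g,g}x_{i,g}^{-1}\cdot x_{i,i}\cdot x_{j,g}x_{i,j,g}^{-2}x_{j,g}^{-1}$. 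Up to that point everything you write checks out.

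The gap is exactly where you predict it: the final ``regrouping.'' After the reduction, each basic commutator sits inside a conjugation by a prefix $P=x_{1,g}^{m_1}\cdots x_{i,g}^{m_i}x_{i+1,g}^{m_{i+1}}\cdots x_{j,g}^{m}\in\GG$ with $0\le m<m_j$, and substituting the identity replaces it by generators conjugated by $Px_{i,g}$ and $Px_{j,g}$. The element $Px_{j,g}$ stays in $\GG$ since $m+1\le m_j\le d-1$, but $Px_{i,g}$ in general does not: the letter $x_{i,g}$ must be commuted back to the left past $x_{i+1,g}^{m_{i+1}}\cdots x_{j,g}^{m}$, which spawns new commutators of the same kind, and may push the exponent of $x_{i,g}$ to $d$, forcing one to split off a further $\GG$-conjugate of $x_{i,g}^{\pm d}$. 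The paper resolves this with a descending induction on the pair $(j,m)$ --- its manipulation of $w_1x^lx_{i+1,g}^{m_{i+1}}\cdots x_{j,g}^{m}x^{\pm1}$, terminating at $X_{i+1,0}=[x^{\pm1},x_{i+1,g}]$ --- and this recursion occupies the entire second half of its proof. Your proposal replaces it with the assertion that the bookkeeping ``closes up,'' supported by a computation in the free class-two nilpotent quotient. That check cannot carry the argument: it verifies the identity only modulo triple commutators, whereas what is needed is an exact rewriting in $\pi_1^+(N_{g,n-1},\ast)$ together with a termination argument for the recursion it generates (note also that one cannot simply conjugate stray factors away, since the subgroup generated by the listed elements is not yet known to be normal --- that is a consequence of the proposition, not an input to it). Supplying that well-founded induction is the missing step; once it is added, your proof is essentially the paper's.
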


\begin{proof}
Remember that $\pi_1^+(N_{g,n-1})$ is generated by
$$X=\{x_{i,g},x_{j,j},y_k,z_k\mid1\le{i}\le{g-1},1\le{j}\le{g},1\le{k}\le{n-1}\},$$
and that $\textrm{Im}\theta$ is isomorphic to $\displaystyle\bigoplus_{1\le{i}\le{g-1}}\Z/d\Z[\theta(x_{i,g})]$.
Then we notice that $\GG$ is a Schreier transversal for $\ker\theta$ in $\pi_1^+(N_{g,n-1})$.
Hence by the Reidemeister-Schreier method, $\ker\theta$ is generated by
$$\{wx^{\pm1}\overline{wx^{\pm1}}^{-1}\mid{}x\in{X},w\in\GG,\overline{wx^{\pm1}}\neq{}wx^{\pm1}\},$$
where $\overline{wx^{\pm1}}$ is the element of $\GG$ corresponding to the image of $wx^{\pm1}\in\pi_1^+(N_{g,n-1})$ in $\displaystyle\bigoplus_{1\le{i}\le{g-1}}\Z/d\Z[\theta(x_{i,g})]$.
Let $I$ be the subgroup of $\pi_1^+(N_{g,n-1})$ generated by the elements in Proposition~\ref{gen-ker-theta}.
We show that for any $x\in{X}$ and $w\in\GG$, $wx^{\pm1}\overline{wx^{\pm1}}^{-1}$ is in $I$.

For $x=x_{j,j}$, $y_k$ and $z_k$, we see
$$wx^{\pm1}\overline{wx^{\pm1}}^{-1}=wx^{\pm1}w^{-1}\in{}I.$$
For $x=x_{i,g}$ and $w=x_{1,g}^{m_1}x_{2,g}^{m_2}\cdots{}x_{g-1,g}^{m_{g-1}}\in\GG$, let
\begin{eqnarray*}
w_1&=&x_{1,g}^{m_1}x_{2,g}^{m_2}\cdots{}x_{i-1,g}^{m_{i-1}},\\
w_2&=&x_{i+1,g}^{m_{i+1}}x_{i+2,g}^{m_{i+2}}\cdots{}x_{g-1,g}^{m_{g-1}}.
\end{eqnarray*}
When $m_i=0$, we see
\begin{eqnarray*}
wx^{-1}\overline{wx^{-1}}^{-1}
&=&w_1w_2x^{-1}w_2^{-1}x^{1-d}w_1^{-1}\\
&=&w_1[w_2,x^{-1}]w_1^{-1}\cdot{}w_1x^{-d}w_1^{-1}.
\end{eqnarray*}
When $m_i=d-1$, we see
\begin{eqnarray*}
wx\overline{wx}^{-1}
&=&w_1x^{d-1}w_2xw_2^{-1}w_1^{-1}\\
&=&w_1x^dw_1^{-1}\cdot{}w_1[x^{-1},w_2]w_1^{-1}.
\end{eqnarray*}
For the other cases, we see
\begin{eqnarray*}
wx^{\pm1}\overline{wx^{\pm1}}^{-1}
&=&w_1x^{m_i}w_2x^{\pm1}w_2^{-1}x^{-m_i\mp1}w_1^{-1}\\
&=&w_1x^{m_i}[w_2,x^{\pm1}]x^{-m_i}w_1^{-1}.
\end{eqnarray*}
By the argument similar to the proof of Lemma~\ref{M_4-gen}, it follows that $[x^{\pm1},w_2]$ is the product of
$$X_{j,m}=x_{i+1,g}^{m_{i+1}}x_{i+2,g}^{m_{i+2}}\cdots{}x_{j-1,g}^{m_{j-1}}x_{j,g}^{m}[x^{\pm1},x_{j,g}]x_{j,g}^{-m}x_{j-1,g}^{-m_{j-1}}\cdots{}x_{i+2,g}^{-m_{i+2}}x_{i+1,g}^{-m_{i+1}}$$
for $i<j<g$ and $0\le{m}<m_j$.
More explicitly,
$$[x^{\pm1},w_2]=(X_{i+1,0}X_{i+1,1}\cdots{}X_{i+1,m_{i+1}-1})\cdots(X_{g-1,0}X_{g-1,1}\cdots{}X_{g-1,m_{g-1}-1}).$$
Hence it suffices to show that for $i<j<g$ and $0\le{m}<m_j$, $w_1x^lX_{j,m}x^{-l}w_1^{-1}$ is in $I$, where $l=m_i$ or $0$.

Let $y=x_{i+1,g}^{m_{i+1}}x_{i+2,g}^{m_{i+2}}\cdots{}x_{j-1,g}^{m_{j-1}}x_{j,g}^{m}$.
We calculate
\begin{eqnarray*}
~[x,x_{j,g}]&=&x_ix_gx_jx_gx_g^{-1}x_i^{-1}x_g^{-1}x_j^{-1}\\
&=&x_ix_gx_jx_jx_gx_i(x_i^{-1}x_g^{-1}x_j^{-1})^2\\
&=&x_ix_gx_jx_jx_g^{-1}x_i^{-1}\cdot{}x_ix_gx_gx_gx_g^{-1}x_i^{-1}\cdot{}x_ix_i\cdot{}x_jx_g(x_ix_jx_g)^{-2}x_g^{-1}x_j^{-1}\\
&=&xx_{j,j}x^{-1}\cdot{}xx_{g,g}x^{-1}\cdot{}x_{i,i}\cdot{}x_{j,g}x_{i,j,g}^{-2}x_{j,g}^{-1},\\
~[x^{-1},x_{j,g}]&=&(x^{-1}[x,x_{j,g}]x)^{-1}\\
&=&x^{-1}x_{j,g}x_{i,j,g}^2x_{j,g}^{-1}x\cdot{}x^{-1}x_{i,i}^{-1}x\cdot{}x_{g,g}^{-1}\cdot{}x_{j,j}^{-1}.
\end{eqnarray*}
Hence $X_{j,m}$ is a product of $yzy^{-1}$ for $z=x_{i^\prime,i^\prime}$, $x^{\pm1}x_{i^\prime,i^\prime}x^{\mp1}$, $x_{j,g}x_{i,j,g}^2x_{j,g}^{-1}$ and $x^{-1}x_{j,g}x_{i,j,g}^2x_{j,g}^{-1}x$.
For $z=x_{i^\prime,i^\prime}$ and $x_{j,g}x_{i,j,g}^2x_{j,g}^{-1}$, since $w_1x^ly$ and $w_1x^lyx_{j,g}$ are in $\GG$, we have that $w_1x^lyzy^{-1}x^{-l}w_1^{-1}$ is in $I$.
In addition, we see
\begin{eqnarray*}
w_1x^lyx^{\pm1}
&=&w_1x^l[y,x^{\pm1}]x^{-l}w_1^{-1}\cdot{}w_1x^{l\pm1}y.
\end{eqnarray*}
Note that
\begin{eqnarray*}
w_1x^{-1}y&=&w_1x^{-d}w_1^{-1}\cdot{}w_1x^{d-1}y\equiv{}w_1x^{d-1}y\pmod{I},\\
w_1x^dy&=&w_1x^dw_1^{-1}\cdot{}w_1y\equiv{}w_1y\pmod{I}.
\end{eqnarray*}
Moreover, similarly it follows that $[y,x^{\pm1}]=[x^{\pm1},y]^{-1}$ is the product of $X_{j^\prime,m^\prime}$ for $i<j^\prime<j$ (resp. $j^\prime=j$) and $0\le{m^\prime}<m_{j^\prime}$ (resp. $0\le{}m^\prime<m$).
Hence for $z=x^{\pm1}x_{i^\prime,i^\prime}x^{\mp1}$ and $x^{-1}x_{j,g}x_{i,j,g}^2x_{j,g}^{-1}x$, since $w_1x^{l\pm1}y$ and $w_1x^{l-1}yx_{j,g}$ are in $\GG$ modulo $I$, we have that $w_1x^lyzy^{-1}x^{-l}w_1^{-1}$ is a product of elements of $I$ and $w_1x^lX_{j^\prime,m^\prime}x^{-l}w_1^{-1}$.
We notice $j^\prime<j$ or $m^\prime<m$.
By the similar calculation, it follows that $w_1x^lX_{j^\prime,m^\prime}x^{-l}w_1^{-1}$ is a product of elements of $I$ and $w_1x^lX_{j^{\prime\prime},m^{\prime\prime}}x^{-l}w_1^{-1}$, where $j^{\prime\prime}<j^\prime$ or $m^{\prime\prime}<m^\prime$.
Repeating, we conclude that $w_1x^lX_{j,m}x^{-l}w_1^{-1}$ is a product of elements of $I$ and $w_1x^lX_{i+1,0}x^{-l}w_1^{-1}$.
Furthermore, since $X_{i+1,0}=[x^{\pm1},x_{i+1,g}]$ is a product of $x_{i^\prime,i^\prime}$, $x^{\pm1}x_{i^\prime,i^\prime}x^{\mp1}$, $x_{i+1,g}x_{i,i+1,g}^2x_{j,g}^{-1}$ and $x^{-1}x_{i+1,g}x_{i,j,g}^2x_{i+1,g}^{-1}x$, we have that $w_1x^lX_{i+1,0}x^{-l}w_1^{-1}$ is in $I$, and so $w_1x^lX_{j,m}x^{-l}w_1^{-1}$ is in $I$.

Therefore for any $x\in{X}$ and $w\in\GG$, $wx^{\pm1}\overline{wx^{\pm1}}^{-1}$ is in $I$.
Thus we finish the proof.
\end{proof}

We now prove Theorem~\ref{gen-n>0}.

For $n\ge1$, suppose that $\M_d(N_{g,n-1})$ is generated by $\e_{n-1}$ and $\h_{n-1}$.
Note that this assumption is true if $n=1$ since $\h_0=\emptyset$.
$\M_d(N_{g,n})$ is generated by $t_{\delta_n}$ and lifts by $\CC$ of the generators of $\CC(\M_d(N_{g,n}))$.
Moreover, by Proposition~\ref{PFC}, $\CC(\M_d(N_{g,n}))$ is generated by images by $\PP$ of all generators of $\ker\theta$ and lifts by $\FF$ of the generators of $\M_d(N_{g,n-1})$.
Hence $\M_d(N_{g,n})$ is generated by
\begin{enumerate}
\item	lift by $\CC\circ\FF$ of $\e_{n-1}$,
\item	lift by $\CC\circ\FF$ of $\h_{n-1}$,
\item	lifts by $\CC$ of $\PP(wx_{i,g}^dw^{-1})$, $\PP(wx_{j,j}w^{-1})$, $\PP(wy_kw^{-1})$, $\PP(wz_kw^{-1})$, $\PP(wx_{i_1,i_2,g}^2w^{-1})$ for $1\le{i}\le{g-1}$, $1\le{j}\le{g}$, $1\le{k}\le{n-1}$, $1\le{i_1<i_2}<g$ and $w\in\GG$,
\item	$t_{\delta_n}$.
\end{enumerate}
(1) is $\e_n$ and (2) is $\h_{n-1}$.
In addition, since $\CC(z(t_{\alpha_{i,g}}^{-d}t_{\alpha_{i;n}}^d)z^{-1})=\PP(wx_{i,g}^dw^{-1})$, $\CC(zt_{\epsilon_{j,n}}z^{-1})=\PP(wx_{j,j}w^{-1})$, $\CC(zt_{\zeta_{k,n}}z^{-1})=\PP(wy_kw^{-1})$, $\CC(zt_{\overline{\zeta}_{k,n}}z^{-1})=\PP(wz_kw^{-1})$ and $\CC(zt_{\eta_{i_1,i_2}}z^{-1})=\PP(wx_{i_1,i_2,g}^2w^{-1})$ for some $z\in\g_n$, we have that (3) is in $\h_n\setminus(\h_{n-1}\cup\{t_{\delta_n}\})$.
Therefore $\M_d(N_{g,n})$ is generated by $\e_n$ and $\h_n$.

Thus we complete the proof.

\begin{rem}
Finite generating sets for $\M_2(N_{g,0})$ and $\T_2(N_{g,0})$ are given (see \cite{Sz2,HS,KO1}).
In addition, A finite generating set for $\M_4(N_{g,0})$ ($\M_{2^l}(N_{g,0})$) is given in this paper.
Therefore, we obtain the finite generating sets for $\M_2(N_{g,n})$, $\T_2(N_{g,n})$ and  $\M_4(N_{g,n})$ ($\M_{2^l}(N_{g,n})$), for $g\ge4$ and $n\ge1$.
\end{rem}

\section*{Acknowledgements}

The author would like to express his gratitude to Susumu Hirose and Genki Omori for their valuable comments and meaningful discussions.


\end{document}